\newtheorem{thm}{Theorem}[section]
\newtheorem{coro}{Corollary}[section]
\newtheorem{lm}{Lemma}[section]
\newtheorem{prop}{Proposition}[section]
\newtheorem{rem}{Remark}[section]
\numberwithin{equation}{section}
\newenvironment{proof}[1][Proof]{\noindent\textbf{#1.} }{\ \rule{0.5em}{0.5em}}
\renewcommand{\phi}{\varphi}
\renewcommand{\chi}{\mathcal{X}}
\begin{document}
\title{Evolving hypersurfaces by their mean curvature in the background manifold evolving by Ricci flow\footnote{This research was supported by Natural Science Foundation of China, Grant No. 11131007, and Zhejiang Provincial Natural Science Foundation of China, Grant No. LY14A010019. }}
\author{{ Weimin Sheng \ \ and \ \ Haobin Yu} }
\date{}
\maketitle

\begin{abstract}
We consider the problem of deforming a one-parameter family of hypersurfaces
immersed into  closed Riemannian manifolds with positive curvature operator.
The  hypersurface in this family satisfies mean curvature flow while the ambient metric
satisfying the normalized Ricci flow.
We prove that if the initial metric of the background
manifold is  sufficiently pinched and
the initial hypersurface  also
satisfies a suitable pinching condition, then either the hypersurfaces shrink to a round point in finite time or
converge to a totally geodesic sphere as the time tends to infinity.

\end{abstract}

\vspace{1mm}{\footnotesize \textbf{Keywords}:\hspace{2mm} mean curvature flow,
normalized Ricci flow, totally geodesic  sphere}

\section{Introduction}

\quad Let $(N^{n+1},\bar{g})$ be a complete, simply connected Riemannian
manifold, $X(\cdot ,t)$: $M^{n}\rightarrow N^{n+1}$ be a one-parameter family
of smooth oriented hypersurface immersions, satisfying the evolution equation
\begin{equation}
\left\{
\begin{aligned}
\frac{\partial X(x,t)}{\partial t}&=-H(x,t)\nu(x,t),x\in M^{n},t>0\\
X(\cdot ,t)&=X_{0},\\
\end{aligned}
\right.
\end{equation}
where $H(x,t)$ is the mean curvature of the hypersurface $
X(\cdot ,t)$ at the point $X(x,t)$, $\nu (x,t)$ is the outer unit normal to $
X(\cdot ,t)$ and $X_{0}$ is a given oriented hypersurface in $N^{n+1}.$ This
is the well-known mean curvature flow which has been studied extensively,
when the  background is a fixed Riemannian manifold,
 see \cite{Co,Hu1,Hu3,Hu5,SW,Wa} for instance.

In \cite{Hu5}, Huisken got an important monotonicity formula for hypersurfaces
in the Gaussian shrinker background. So it is reasonable to consider the mean curvature flow in a moving ambient space.
In particular, when the metric of $N^{n+1}$ satisfies the
Ricci flow,  we call the coupled evolutions as the "Ricci-Mean curvature flow".
Magni-Mantegazza-Tsatis\cite{Ma} showed a similar  monotonicity as Huisken's for
mean curvature flow in a gradient Ricci soliton background.
Recently, John lott \cite{Lo} presented a very valuable explanation on the "Ricci-Mean curvature flow".
He used the variation method to get the evolution equations of the second
fundamental form and the mean curvature. In the case of $N^{n+1}$ being a gradient Ricci soliton,
he introduced the concept of mean curvature soliton which can be regarded as the generalization of self-shrinker. In \cite{HL}, Han and Li studied a surface immersed in a K\"ahler surface evolved by its mean curvature flow while the K\"ahler surface evolved by K\"ahler-Ricci flow. They proved if the K\"ahler surface is sufficiently close to a K\"ahler-Einstein surface and the initial surface is sufficiently close to a holomorphic curve, then the surface converges to a holomorphic curve along the K\"ahler-Ricci mean curvature flow. This is the first convergence result on Ricci-Mean curvature flow.

In this paper, we consider a one-parameter family of immersions
$X(\cdot,t): M^n\rightarrow (N^{n+1},\bar{g}(t))$, which satisfies
\begin{equation}\label{1.2}
\left\{
\begin{aligned}
&\frac{\partial X(x,t)}{\partial t}=-H(x,t)\nu(x,t),\ \ \ \ x\in M^{n},t>0 \\
&\frac{\partial \bar{g}(t)}{\partial t}=-2\overline{Ric}(t)+\frac{2\bar{r}}{n+1}\bar{g}(t),\ \ \  \bar{g}(0)=\bar{g}_0\\
\end{aligned}
\right.
\end{equation}
where $\bar{r}$ is the average of the scalar curvature of the background  metric $\bar{g}$.
In \cite{Hu4}, Huisken considered the deformation of hypersurfaces of the sphere by their mean curvature,
he proved if the initial hypersurface satisfies a suitable pinching condition, then either the hypersurfaces
shrink to a round point in finite time or the equation  has a smooth solution $M_t$ for $0\leq t<\infty$  and
$M_t$ converges to a totally geodesic hypersurface when $t$ tends to $\infty$.
We can show the similar result also holds under $(1.2)$,
under the assumption that the metric $\bar{g}_0$ of $N^{n+1}$ has positive curvature operator and is sufficiently pinched.
To be precise, we prove
\begin{thm}\label{thm1.1}
There exists a positive constant $\varepsilon_0\leq\frac{1}{4(n+1)}$ small, such that if $(N^{n+1},\bar{g}_0)$ satisfies
\begin{equation}\tag{1.3}\label{1.3}
\|\bar{R}_{\alpha\beta\gamma\delta}-(\bar{g}_{\alpha\gamma}
\bar{g}_{\beta\delta}-\bar{g}_{\alpha\delta}
\bar{g}_{\beta\gamma})\|^2\leq\varepsilon_0^2,\ \
\|\bar{\nabla}\bar{R}m\|\leq \varepsilon_0
\end{equation}
where the norm $\|\cdot\|$ is taken with respect to $\bar{g}_0$, and  the initial hypersurface $M_0$ immersed into $(N^{n+1},\bar{g}_0)$ satisfies
\begin{equation}\tag{1.4}\label{1.4}
\|A\|^2\leq\alpha_nH^2+1
\end{equation}
with
\[\alpha_2=\frac{11}{16}, \ \ \alpha_n=\frac{4}{4n-3}, n\geq 3\]
then for the solution to $(1.2)$,  either

    $(1)$ $M_t$ shrink to a round point in finite time $T<\infty$,
     and $\max_{M_t}|H|\rightarrow\infty$ as $t\rightarrow T$; or

    $(2)$ the equation has a solution $M_t$ for $0\leq t< \infty$, and $M_t$ converge to a totally geodesic sphere in
$C^{\infty}$-topology.
\end{thm}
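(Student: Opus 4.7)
The strategy is to mirror Huisken's treatment of mean curvature flow in the round sphere, with the added complication that the ambient metric is itself evolving under normalized Ricci flow. My first step would be to obtain uniform control on the ambient geometry. Since $\bar g_0$ has positive curvature operator and satisfies the pinching (\ref{1.3}) with $\varepsilon_0$ small, Hamilton's and B\"ohm--Wilking's convergence theory applied to the normalized Ricci flow shows that $\bar g(t)$ exists for all $t\ge 0$ and converges exponentially to a metric of constant sectional curvature $1$. One refines this, via a maximum principle on the evolution of the ambient curvature tensor and its covariant derivative, to show that both quantities in (\ref{1.3}) stay bounded by a fixed multiple of $\varepsilon_0$ (and in fact decay exponentially) uniformly in $t$. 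This supplies the ``almost space form'' bounds that feed every subsequent estimate.

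\noindent Next, I would derive, following Lott, the evolution equations for the induced metric, the second fundamental form $h_{ij}$, the mean curvature $H$ and $|A|^2$ under the coupled system (\ref{1.2}). These are Huisken's classical equations on a space form plus explicit extra terms involving $\bar Rm$, $\bar\nabla\bar Rm$ restricted to $M_t$, together with the normalization $\frac{2\bar r}{n+1}\bar g$. By the previous step, each new contribution is controlled by $\varepsilon_0$ times a polynomial in $|A|$ and $H$. The crucial step is then to show that (\ref{1.4}) is preserved. Setting $f=|A|^2-\alpha_n H^2-1$, one computes $\partial_t f-\Delta f$ on $M_t$; Huisken's algebraic identity on space forms produces a strictly negative reaction term provided $\alpha_n$ is chosen as in the statement, while the error terms from the coupling are absorbed into this good term and the additive constant $1$ once $\varepsilon_0$ is small enough. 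A parabolic maximum principle then keeps $f\le 0$ as long as the flow exists.

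\noindent The dichotomy now proceeds in a familiar way. If $\max_{M_t}|H|$ blows up at some finite time $T<\infty$, a point-picking and blow-up argument, in which the rescaled ambient background looks more and more Euclidean at small parabolic scales thanks to Step 1, together with the preserved pinching (\ref{1.4}) and Huisken's rigidity result for pinched hypersurfaces, identifies the limit as a round shrinking sphere, so $M_t$ contracts to a round point, giving alternative (1). If instead the flow exists on $[0,\infty)$, one upgrades (\ref{1.4}) to an integral/$L^p$ estimate in the spirit of \cite{Hu4} showing that $|A|^2-H^2/n$ decays in $t$, which combined with the exponential convergence of $\bar g(t)$ to the round metric forces the traceless second fundamental form to vanish in the limit. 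Standard interpolation and Schauder estimates then promote this to $C^\infty$ convergence of $M_t$ to a totally geodesic round sphere, yielding alternative (2).

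\noindent The hardest step is unquestionably the preservation of the pinching (\ref{1.4}). The new reaction terms coming from $\partial_t \bar g$, from the ambient sectional curvatures differing from $1$, and from $\bar\nabla\bar Rm$ must all be absorbed without disturbing the delicate algebraic cancellation that makes $\alpha_n=\frac{4}{4n-3}$ (respectively $\frac{11}{16}$ when $n=2$) the threshold; this is exactly what forces the smallness of $\varepsilon_0$ and the additive constant $1$ on the right-hand side of (\ref{1.4}). A secondary difficulty, in the long-time alternative, is matching the rates of decay of $|A|^2-H^2/n$ and of the ambient deviation from a space form so that higher-derivative estimates close and genuine smooth convergence, rather than merely subsequential or weak convergence, is achieved.
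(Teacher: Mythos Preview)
Your overall architecture---control the ambient geometry, derive the coupled evolution equations, preserve the pinching (\ref{1.4}), then split into blow-up versus long-time cases---matches the paper. However, you underestimate what sits between Lemma~2.6 (preservation of (\ref{1.4})) and the final dichotomy, and your treatment of Case~1 diverges from the paper's.

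The paper does \emph{not} run a blow-up/point-picking argument in the finite-time case. Instead, the technical core consists of two intermediate results that you have essentially omitted. First, a quantitative pinching estimate (Theorem~3.1): $\|A\|^2-\tfrac{1}{n}H^2\le C_1(H^2+1)^{1-\sigma}e^{-\delta_1 t}$, proved via Stampacchia/De~Giorgi iteration on $f_\sigma=(\|A\|^2-\tfrac{1}{n}H^2)W^{\sigma-1}$ after first obtaining $L^p$ bounds. This is the analogue of what you place only in Case~2, but in the paper it is used in \emph{both} cases. Second, a gradient estimate (Theorem~4.1): $\|\nabla H\|^2\le(\beta H^4+C_\beta)e^{-\delta_1 t/2}$, whose proof consumes Theorem~3.1. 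In Case~1 the paper uses this gradient bound together with Myers' theorem and the sectional curvature lower bound from Lemma~3.1 to show $H_{\min}\ge(1-2\beta)H_{\max}$ at a late time, hence $\|A\|^2<\tfrac{1}{n-1}H^2$ everywhere, so $M_\theta$ is strictly convex and Huisken's result from \cite{Hu3} finishes the argument. No rescaling or compactness is invoked. In Case~2 the same gradient estimate, combined with a short maximum-principle argument bounding $H_{\max}$ and $H_{\min}$ by $\pm\tilde C e^{-\delta_1 t/4}$, forces $H$ (not just the traceless part) to decay exponentially; this is what drives $\|A\|\to 0$.

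Consequently your identification of the ``hardest step'' is off. Preserving (\ref{1.4}) is a clean maximum-principle computation once the ambient decay is in hand; the genuine work lies in the $L^p$-to-$L^\infty$ machinery behind Theorem~3.1 and in assembling the gradient estimate. Your blow-up route for Case~1 may be viable in principle, but it would require an independent classification of the rescaled limit and control of the singularity type, none of which the paper needs.
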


  From $(\ref{1.3})$,  we know $(N^{n+1},\bar{g}_0)$ has positive curvature,
by the result of Hamilton\cite{Ha} and Huisken\cite{Hu2},
$(N^{n+1},\bar{g}(t))$ converge to the spherical space form as $t\rightarrow \infty$.
But it is not easy to see the behaviour of the
hypersurface with its induced metric evolving under the mean curvature flow. The key problem is
when the mean curvature flow will develop singularities in a finite time.
If it will not develop a singularity, we wish to understand which one is faster between
the background manifold to the sphere under Ricci flow and the immersed hypersurface to its totally
geodesic hypersurface under mean curvature flow.

  The rest of the paper is organized as follows. In section 2,
we give some preliminary and get the evolution equations for quantities of hypersurfaces.
In section 3, we  derive a pinching estimate to
control the second fundamental form  by using an inequality derived above. In section 4,
we  show the gradient of the mean curvature can be controlled by the mean curvature itself.
We  give the proof of Theorem 1.1 in the last section .

\section{ Preliminaries and Evolution Equations}

In this section, we  gather some estimates which will be used later.
We choose a local  frames field
$\{e_0, e_1, \cdots, e_n\}$ in $N^{n+1}$ such that $e_0=\nu, e_i=\frac{\partial X}{\partial x_i}$ on $X(\cdot)$. Let $\nabla$ and $\Delta$
denote the connection and Laplacian on $M$ determined by the induced metric $g$.
We denote all the quantities on $(N^{n+1},\bar{g})$ with a bar, for example,
by  $\bar{\nabla}$ the covariant derivative, $\bar{\Delta}$ the
Laplacian,  and $\bar{R}m=\bar{R}_{\alpha\beta\gamma\delta}$ the Riemannian curvature tensor.
Let $\overset{\circ}{R}m$ be the tracefree part of curvature operator, i.e,
\[\overset{\circ}{R}_{\alpha\beta\gamma\delta}=R_{\alpha\beta\gamma\delta}
-\frac{R}{n(n+1)}(g_{\alpha\gamma} g_{\beta\delta}-g_{\alpha\delta}g_{\beta\gamma})\]
and
\[\bar{E}_{\alpha\beta\gamma\delta}=\bar{R}_{\alpha\beta\gamma\delta}-
\frac{\bar{r}}{n(n+1)}(\bar{g}_{\alpha\gamma}\bar{g}_{\beta\delta}
-\bar{g}_{\alpha\delta}\bar{g}_{\beta\gamma}) \]
We will show the exponential decay  of $\|\bar{E}\|$ and $\|\bar{\nabla}\bar{R}m\|$ under the normalized Ricci flow.
First we consider  the Ricci flow with $\tilde{g}(\cdot,0)=\bar{g}_0$,
\[\frac{\partial }{\partial \tilde{t}}\tilde{g}_{\alpha\beta}=-2\tilde{R}_{\alpha\beta}, \quad \tilde{t}\in[0,T),\]
where $T$ is the singular time, and $(N^{n+1},\bar{g}_0)$ satisfies the assumption (1.3) for some constant
$\varepsilon_0$.

By our assumption, the sectional curvature $\tilde{K}(x,0)$ and the scalar curvature $\tilde{R}(x,0)$ of $\bar{g}_0$
satisfy
\begin{equation}\label{2.1}
1-\varepsilon_0\leq \tilde{K}(x,0)\leq 1+\varepsilon_0,
\ \ n(n+1)(1-\varepsilon_0)\leq\tilde{R}(x,0)\leq n(n+1)(1+\varepsilon_0),
\end{equation}
which is followed by
\begin{align*}
\|\overset{\circ}{\tilde{R}}m\|^2(x,0)&\leq\|\tilde{R}_
{\alpha\beta\gamma\delta}-(\tilde{g}_{\alpha\gamma}\tilde{g}_{\beta\delta}
-\tilde{g}_{\alpha\delta}\tilde{g}_{\beta\gamma})\|^2(x,0)
+\|(1-\frac{\tilde{R}}{n(n+1)})(\tilde{g}_{\alpha\gamma}\tilde{g}_{\beta\delta}
-\tilde{g}_{\alpha\delta}\tilde{g}_{\beta\gamma})\|^2(x,0)\\
&\leq \varepsilon_0^2+2n(n+1)\varepsilon_0^2\leq\frac
{2(n+1)^2\varepsilon_0^2\tilde{R}^2(x,0)}{n^2(n+1)^2(1-\varepsilon_0)^2}\\
&\leq\frac{\tilde{R}^2(x,0)}{4n^2(n+1)^2}
\end{align*}
We need the following results which were derived by Huisken in \cite{Hu2} and take the following version in
our case.
\begin{lm}\label{lm 2.1}\textbf{(Theorem 3.1 of \cite{Hu2}).}
Under the assumption $(1.3)$, it always holds
\[\|\tilde{R}m\|^2-\frac{2}{n(n+1)}\tilde{R}^2\leq\frac{\tilde{R}^2}{4n^2(n+1)^2},\]
which implies the sectional curvature $\tilde{K}(x,\tilde{t})$ of $(N^{n+1},\tilde{g})$ satisfyes
$\tilde{K}(x,\tilde{t})\geq \frac{\tilde{R}(x,\tilde{t})}{2n(n+1)}$.
Moreover, there exist constants $C_0<\infty$ and $\delta_0\in(0,1)$ depending only on
$n$ such that $\|\overset{\circ}{\tilde{R}}m\|^2\leq C_0\tilde{R}^{2-\delta_0}$
holds on $0\leq \tilde{t}< T$.
\end{lm}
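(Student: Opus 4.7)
This is Huisken's pinching theorem for Ricci flow, so I would follow the standard Hamilton--Huisken maximum principle strategy: recast the first inequality as a bound on the traceless curvature tensor, propagate that bound under the flow, and then bootstrap to polynomial decay of the trace-free part.

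A short computation using the orthogonal decomposition of $\tilde Rm$ gives
\[\|\overset{\circ}{\tilde R}m\|^2 \;=\; \|\tilde Rm\|^2 \;-\; \frac{2\tilde R^2}{n(n+1)},\]
so the first inequality in the lemma is equivalent to $\|\overset{\circ}{\tilde R}m\|^2 \leq \tilde R^2/(4n^2(n+1)^2)$. By the calculation already displayed in the excerpt, this bound holds at $\tilde t = 0$. The sectional curvature consequence is then algebraic: every sectional curvature differs from the average value $\tilde R/(n(n+1))$ by at most $\|\overset{\circ}{\tilde R}m\|\leq \tilde R/(2n(n+1))$, which gives $\tilde K \geq \tilde R/(2n(n+1))$.

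To propagate the pinching in time, I would work with the unnormalized Ricci flow (a pointwise rescaling converts between the two formulations) and use the standard reaction--diffusion equations
\[\partial_{\tilde t}\tilde R \;=\; \Delta \tilde R + 2|\widetilde{Ric}|^2, \qquad \partial_{\tilde t}\tilde R_{\alpha\beta\gamma\delta} \;=\; \Delta \tilde R_{\alpha\beta\gamma\delta} + Q_{\alpha\beta\gamma\delta}(\tilde Rm),\]
where $Q$ is Hamilton's quadratic in the curvature. These combine into an evolution equation for $f_0 := \|\overset{\circ}{\tilde R}m\|^2/\tilde R^2$ whose reaction term is pointwise nonpositive on the set $\{f_0 \leq 1/(4n^2(n+1)^2)\}$, so the bound is preserved on $[0,T)$ by the maximum principle. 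For the improved polynomial estimate, I would repeat the argument with $f_\sigma := \|\overset{\circ}{\tilde R}m\|^2 / \tilde R^{2-\sigma}$. Its evolution picks up the additional favourable term $-\sigma(1-\sigma)\tilde R^{-2}|\nabla \tilde R|^2 f_\sigma$; combined with a Kato-type inequality $|\nabla \tilde R|^2 \leq c(n)|\nabla \widetilde{Ric}|^2$ to absorb the Hamilton gradient correction, for $\sigma = \delta_0$ small enough depending only on $n$ the total reaction term stays nonpositive, and the maximum principle delivers $f_{\delta_0}\leq C_0$.

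The main technical hurdle is showing that the reaction term in the evolution of $f_0$ (and hence $f_\sigma$) is indeed pointwise nonpositive. This requires expanding $Q(\tilde Rm)$, isolating the contribution of the trace-free part, and showing that the cubic terms of the form $\overset{\circ}{\tilde R}m \ast \overset{\circ}{\tilde R}m \ast \overset{\circ}{\tilde R}m$ are dominated by the good quadratic term $\tilde R\cdot\|\overset{\circ}{\tilde R}m\|^2$. This is exactly where the smallness of $\varepsilon_0$ and the specific value $1/(4n(n+1))$ in the pinching constant enter, and it is the technical heart of Huisken's original argument.
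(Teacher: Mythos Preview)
The paper does not give its own proof of Lemma~2.1: the statement is quoted verbatim as Theorem~3.1 of Huisken~\cite{Hu2}, and the only work done locally is the displayed computation immediately preceding the lemma (verifying that the pinching $\|\overset{\circ}{\tilde R}m\|^2\le \tilde R^2/(4n^2(n+1)^2)$ holds at $\tilde t=0$ under assumption~(1.3)) together with Remark~2.1, which records that Huisken's constant satisfies $C_0\le\varepsilon_0^2$ in the present setting. So there is no ``paper proof'' to compare against; your sketch is, correctly, an outline of Huisken's original maximum-principle argument for the quotients $f_\sigma=\|\overset{\circ}{\tilde R}m\|^2/\tilde R^{2-\sigma}$.

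One correction to your outline: the extra term you pick up when passing from $f_0$ to $f_\sigma$ has the opposite sign. Writing $f_\sigma=f_0\tilde R^{\sigma}$ and using $(\partial_{\tilde t}-\Delta)\tilde R^{\sigma}=\sigma\tilde R^{\sigma-1}\cdot 2|\widetilde{Ric}|^2-\sigma(\sigma-1)\tilde R^{\sigma-2}|\nabla\tilde R|^2$, the new gradient contribution is $+\sigma(1-\sigma)\tilde R^{-2}|\nabla\tilde R|^2 f_\sigma$, and the new reaction contribution $+2\sigma\tilde R^{-1}|\widetilde{Ric}|^2 f_\sigma$ is likewise positive. Neither is ``favourable''. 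The mechanism in Huisken's proof is rather that the reaction term for $f_0$ is already strictly negative, of the form $-c(n)\tilde R f_0$ plus an error $O(\|\overset{\circ}{\tilde R}m\|)\cdot\tilde R f_0$ controlled by the preserved pinching; this strict negativity leaves room to absorb the small positive $\sigma$-terms once $\sigma=\delta_0(n)$ is chosen small enough. Your identification of the ``main technical hurdle'' (the sign analysis of the reaction term after decomposing $Q(\tilde Rm)$ into trace and trace-free parts) is exactly right, and is where the specific constant $1/(4n^2(n+1)^2)$ enters.
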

\begin{rem}\label{rem 2.1}
In Theorem 3.1 of \cite{Hu2}, Huisken gave the explicit  expression of $C_0$, i.e,
\[C_0=\sup_{(N^{n+1},\tilde{g}(0))}\|\overset{\circ}{\tilde{R}}m\|^2\tilde{R}^{\delta_0-2},\]
by our assumption (1.3), $C_0\leq \varepsilon_0^2$.
\end{rem}
\begin{lm}\label{lm 2.2}\textbf{(Theorem 4.1 of \cite{Hu2}).}
For any $\eta>0$, we can find $C(\eta)$ depending only  on $\eta$ and $n$,
such that on $0\leq \tilde{t}<T$ we have
\[\|\tilde{\nabla}\tilde{R}\|^2\leq \eta \tilde{R}^3+C(\eta)\]
\end{lm}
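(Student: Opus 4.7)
The plan is to apply the parabolic maximum principle to the auxiliary function
\[
f_\eta = \|\tilde{\nabla}\tilde{R}\|^2 - \eta\tilde{R}^3 - K,
\]
where $K = K(\eta,n)$ is a large constant to be fixed at the end, and to show that $f_\eta \le 0$ throughout $[0,T)$; this yields the stated estimate at once.

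First I would derive the evolution equations under the unnormalized Ricci flow. Starting from $\partial_{\tilde t}\tilde{R} = \tilde{\Delta}\tilde{R} + 2\|\tilde{R}_{\alpha\beta}\|^2$ and the standard commutator identity for the Laplacian acting on the $1$-form $\tilde{\nabla}\tilde{R}$, one obtains
\[
(\partial_{\tilde t}-\tilde{\Delta})\|\tilde{\nabla}\tilde{R}\|^2 = -2\|\tilde{\nabla}^2\tilde{R}\|^2 + Q,
\]
where $Q$ collects terms of schematic type $\tilde{R}m\ast\tilde{\nabla}\tilde{R}\ast\tilde{\nabla}\tilde{R}$ and $\tilde{\nabla}\tilde{R}\ast\tilde{\nabla}\|\tilde{R}_{\alpha\beta}\|^2$. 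A parallel computation yields
\[
(\partial_{\tilde t}-\tilde{\Delta})\tilde{R}^3 = 6\tilde{R}^2\|\tilde{R}_{\alpha\beta}\|^2 - 6\tilde{R}\|\tilde{\nabla}\tilde{R}\|^2,
\]
whose second, negative term is the essential good term driving the argument.

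Next I would exploit the pinching of Lemma \ref{lm 2.1}. Decomposing every curvature tensor into its constant-sectional-curvature part (proportional to $\tilde{R}$) and a tracefree remainder controlled by $\|\overset{\circ}{\tilde{R}}m\|\le C_0^{1/2}\tilde{R}^{1-\delta_0/2}$, each bad term in $Q$ and in $6\eta\tilde{R}^2\|\tilde{R}_{\alpha\beta}\|^2$ can be reorganized as $C\tilde{R}^{3-\delta_0/2}\|\tilde{\nabla}\tilde{R}\|$ or $C\tilde{R}^{4-\delta_0}$, after absorbing the scalar-trace piece of $\tilde{R}^2\|\tilde{R}_{\alpha\beta}\|^2$ by Young's inequality into the good term. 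At a spacetime point where $f_\eta$ attains a positive maximum, the maximum principle forces $\|\tilde{\nabla}\tilde{R}\|^2\ge\eta\tilde{R}^3+K$ and $(\partial_{\tilde t}-\tilde{\Delta})f_\eta\ge 0$, while the previous computation gives
\[
(\partial_{\tilde t}-\tilde{\Delta})f_\eta\le -c(\eta,n)\tilde{R}\|\tilde{\nabla}\tilde{R}\|^2+C(n)\tilde{R}^{3-\delta_0/2}\|\tilde{\nabla}\tilde{R}\|+\text{lower order},
\]
which is strictly negative as soon as $\tilde{R}$ exceeds a threshold depending only on $\eta$ and $n$. In the complementary regime where $\tilde{R}$ is bounded, a short-time Shi-type gradient estimate controls $\|\tilde{\nabla}\tilde{R}\|$ uniformly on $[0,T)$, and this constant can be absorbed into $K$, ruling out any positive maximum of $f_\eta$.

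The main obstacle I expect is the bookkeeping in the middle step: every term in $Q$ and in the curvature couplings must be split into trace and tracefree parts, and one must check that each tracefree part really does carry a factor of $\|\overset{\circ}{\tilde{R}}m\|$, so that Lemma \ref{lm 2.1} supplies the decisive power gain $\tilde{R}^{-\delta_0/2}$. Without this gain the worst bad terms would be of size $\tilde{R}\|\tilde{\nabla}\tilde{R}\|^2$ and would exactly cancel the good term produced by $\partial_{\tilde t}\tilde{R}^3$, at which point the maximum principle argument collapses.
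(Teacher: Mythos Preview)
The paper does not give its own proof of this lemma; it is quoted directly from Huisken \cite{Hu2}, so there is no in-paper argument to compare against. That said, your sketch has a genuine gap that is worth naming.

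The sign bookkeeping is off. You single out $-6\tilde{R}\|\tilde{\nabla}\tilde{R}\|^2$ in $(\partial_{\tilde t}-\tilde{\Delta})\tilde{R}^3$ as the ``essential good term,'' but in $f_\eta = \|\tilde{\nabla}\tilde{R}\|^2 - \eta\tilde{R}^3$ this term enters $(\partial_{\tilde t}-\tilde{\Delta})f_\eta$ as $+6\eta\tilde{R}\|\tilde{\nabla}\tilde{R}\|^2$, which is bad; the genuinely favourable piece is $-6\eta\tilde{R}^2\|\tilde{R}ic\|^2$. More importantly, writing out $Q=4\langle\tilde{\nabla}\tilde{R},\tilde{\nabla}\|\tilde{R}ic\|^2\rangle$ and separating the trace of $\tilde{R}ic$ produces a leading bad term $\tfrac{8}{n+1}\tilde{R}\|\tilde{\nabla}\tilde{R}\|^2$ plus a remainder involving $\|\tilde{\nabla}\tilde{R}ic\|$. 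The first has a coefficient independent of $\eta$ and therefore cannot be absorbed by $-\tfrac{6\eta}{n+1}\tilde{R}^4$ at a putative positive maximum of $f_\eta$; the second is not controlled by Lemma~\ref{lm 2.1}, which bounds tracefree curvature pointwise but says nothing about its gradient. So the inequality $(\partial_{\tilde t}-\tilde{\Delta})f_\eta\le -c(\eta,n)\tilde{R}\|\tilde{\nabla}\tilde{R}\|^2+\cdots$ you assert does not follow from your choice of test function, and the Shi-type fallback does not repair this since it presupposes a spacetime curvature bound rather than a bound at a single point.

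The remedy Huisken actually uses, and which this paper reproduces in normalized form in (\ref{2.11})--(\ref{2.13}), is to add large multiples of the tracefree quantities $\|\tilde{R}m\|^2-\tfrac{2}{n(n+1)}\tilde{R}^2$ and $\|\tilde{R}ic\|^2-\tfrac{1}{n+1}\tilde{R}^2$ to the test function. Their heat operators contribute $-2\|\tilde{\nabla}\tilde{R}m\|^2$ and $-2\|\tilde{\nabla}\tilde{R}ic\|^2$, which absorb the stray $\|\tilde{\nabla}\tilde{R}ic\|$ factor, and together with the refined Kato inequality $\|\tilde{\nabla}\tilde{R}ic\|^2-\tfrac{1}{n+1}\|\tilde{\nabla}\tilde{R}\|^2\ge c_n\|\tilde{\nabla}\tilde{R}\|^2$ (invoked just above (\ref{2.12})) they supply the missing negative gradient term that closes the maximum-principle argument.
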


Let $V$ be the volume of $(N^{n+1},\bar{g}_0)$.
We choose the normalization factor
$\psi(\tilde{t})=(\frac{\int d\mu_{\tilde{g}}}{V})^{-\frac{2}{n+1}}$
and a new time scale $t=\int_0^{\tilde{t}}\psi(s)ds$,
then $\bar{g}(t)=\psi(\tilde{t})\tilde{g}(\tilde{t})$
satisfy the normalized Ricci flow with $\bar{g}(0)=\bar{g}_0$ and
$\frac{d \ln \psi}{dt}=\frac{2}{n+1}\bar{r}$. Define a function  $\phi$ by $\phi(t)=\psi(\tilde{t})$.
 The following evolution equations for
the normalized Ricci flow were established by Hamilton in \cite{Ha}.
\begin{lm}\label{lm 2.3}  Under the normalized Ricci flow,
\begin{align*}
&(1) \frac{\partial }{\partial t}\|\bar{R}m\|^2=
\bar{\Delta}\|\bar{R}m\|^2-2\|\bar{\nabla}\bar{R}m\|^2
+4\bar{Q}_{\alpha\beta\gamma\delta}
\bar{R}_{\alpha\beta\gamma\delta}
-\frac{4}{n+1}\bar{r}\|\bar{R}m\|^2,\\
&(2) \frac{\partial}{\partial t}\|\bar{R}ic\|^2=
\bar{\Delta}\|\bar{R}ic\|^2-2\|\bar{\nabla}\bar{R}ic\|^2
+4\bar{R}_{\alpha\beta}\bar{R}_{\gamma\delta}\bar{R}_{\alpha\gamma\beta\delta}
-\frac{4}{n+1}\bar{r}\|\bar{R}ic\|^2,\\
&(3) \frac{\partial}{\partial t}\bar{R}
=\bar{\Delta}\bar{R}+2\|\bar{R}ic\|^2
-\frac{2}{n+1}\bar{r}\bar{R}.
\end{align*}
where $\bar{Q}_{\alpha\beta\gamma\delta}=(\bar{B}_{\alpha\beta\gamma\delta}-\bar{B}_{\alpha\beta\delta\gamma}
-\bar{B}_{\alpha\delta\beta\gamma}+\bar{B}_{\alpha\gamma\beta\delta})
\bar{R}_{\alpha\beta\gamma\delta}$, and
$\bar{B}_{\alpha\beta\gamma\delta}=\bar{R}_{\alpha\eta\beta\theta}
\bar{R}_{\gamma\eta\delta\theta}$.
\end{lm}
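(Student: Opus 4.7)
The plan is to derive these normalized-Ricci-flow identities from the corresponding unnormalized ones via the rescaling $\bar{g}(t) = \psi(\tilde{t})\tilde{g}(\tilde{t})$ and time change $dt = \psi\, d\tilde{t}$ fixed in the preceding paragraph, together with the normalization $d\ln\psi/dt = 2\bar{r}/(n+1)$.

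First I would recall Hamilton's classical evolution formulas under the unnormalized Ricci flow $\partial_{\tilde{t}}\tilde{g}=-2\tilde{R}ic$:
\[
\partial_{\tilde{t}}\|\tilde{R}m\|^2 = \tilde{\Delta}\|\tilde{R}m\|^2 - 2\|\tilde{\nabla}\tilde{R}m\|^2 + 4\tilde{Q}_{\alpha\beta\gamma\delta}\tilde{R}_{\alpha\beta\gamma\delta},
\]
\[
\partial_{\tilde{t}}\|\tilde{R}ic\|^2 = \tilde{\Delta}\|\tilde{R}ic\|^2 - 2\|\tilde{\nabla}\tilde{R}ic\|^2 + 4\tilde{R}_{\alpha\beta}\tilde{R}_{\gamma\delta}\tilde{R}_{\alpha\gamma\beta\delta},
\]
\[
\partial_{\tilde{t}}\tilde{R} = \tilde{\Delta}\tilde{R} + 2\|\tilde{R}ic\|^2.
\]
These come from differentiating the curvature expressions in the metric and its inverse, substituting in $\partial_{\tilde{t}}\tilde{g}=-2\tilde{R}ic$, commuting covariant derivatives (which produces the curvature-quadratic terms), and using the second Bianchi identity to reorganize $\tilde{\Delta}\tilde{R}m$ and $\tilde{\Delta}\tilde{R}ic$.

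Next I would collect the scaling relations induced by $\bar{g} = \psi\tilde{g}$ with $\psi$ constant in space. The Levi-Civita connection and the $(1,3)$ Riemann tensor are preserved, while $\bar{R}_{\alpha\beta\gamma\delta} = \psi\tilde{R}_{\alpha\beta\gamma\delta}$, $\bar{R}_{\alpha\beta} = \tilde{R}_{\alpha\beta}$, and $\bar{R} = \psi^{-1}\tilde{R}$. Counting the inverse-metric contractions in each quantity gives $\|\bar{R}m\|^2 = \psi^{-2}\|\tilde{R}m\|^2$, $\|\bar{R}ic\|^2 = \psi^{-2}\|\tilde{R}ic\|^2$, $\|\bar{\nabla}\bar{R}m\|^2 = \psi^{-3}\|\tilde{\nabla}\tilde{R}m\|^2$, $\bar{\Delta} f = \psi^{-1}\tilde{\Delta} f$ for scalar $f$, and $\bar{Q}_{\alpha\beta\gamma\delta}\bar{R}_{\alpha\beta\gamma\delta} = \psi^{-3}\tilde{Q}_{\alpha\beta\gamma\delta}\tilde{R}_{\alpha\beta\gamma\delta}$, with the analogous $\psi^{-3}$ scaling for $\bar{R}_{\alpha\beta}\bar{R}_{\gamma\delta}\bar{R}_{\alpha\gamma\beta\delta}$.

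Finally I would combine the two steps. From $d\ln\psi/dt = 2\bar{r}/(n+1)$ one gets $d\psi/d\tilde{t} = 2\bar{r}\psi^2/(n+1)$. Writing $\|\bar{R}m\|^2(t) = \psi(\tilde{t})^{-2}\|\tilde{R}m\|^2_{\tilde{g}}(\tilde{t})$ and applying $\partial_t = \psi^{-1}\partial_{\tilde{t}}$ via the chain rule produces two contributions: the derivative of the $\psi^{-2}$ prefactor generates exactly the correction $-\frac{4\bar{r}}{n+1}\|\bar{R}m\|^2$, while $\psi^{-3}\partial_{\tilde{t}}\|\tilde{R}m\|^2_{\tilde{g}}$, after substituting Step~1 and the scaling relations of Step~2, assembles cleanly into $\bar{\Delta}\|\bar{R}m\|^2 - 2\|\bar{\nabla}\bar{R}m\|^2 + 4\bar{Q}_{\alpha\beta\gamma\delta}\bar{R}_{\alpha\beta\gamma\delta}$. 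The same recipe, using $\|\bar{R}ic\|^2 = \psi^{-2}\|\tilde{R}ic\|^2$ and $\bar{R} = \psi^{-1}\tilde{R}$, yields (2) and (3). The main obstacle is not conceptual but combinatorial: one must track the correct power of $\psi$ attached to each quantity so that the $d\psi/d\tilde{t}$ correction and the rescaled unnormalized evolution combine with precisely the coefficients appearing in the statement.
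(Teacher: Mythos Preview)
Your derivation is correct: the scaling weights you list are all right, and the chain-rule computation indeed produces the $-\frac{4}{n+1}\bar r$, $-\frac{4}{n+1}\bar r$, and $-\frac{2}{n+1}\bar r$ corrections in (1), (2), (3) respectively. The paper itself does not prove Lemma~2.3 at all; it simply attributes the formulas to Hamilton~\cite{Ha}, so your rescaling argument is in fact the standard proof behind that citation rather than an alternative to anything in the paper.
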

Now we are ready to prove
\begin{thm}\label{thm 2.1}
There exist some universal constant $\bar{C}$ and  $\lambda$ depending only on $n$  such that
under the normalized Ricci flow,
\[\|\bar{E}\|(\cdot,t) \leq\bar{C}\varepsilon_0 e^{-\lambda t},\quad
\|\bar{\nabla}\bar{R}m(\cdot,t)\|\leq \bar{C}\varepsilon_0e^{-\lambda t}\]
\end{thm}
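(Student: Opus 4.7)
The plan is to transfer the unnormalized pointwise bounds of Lemmas~\ref{lm 2.1}--\ref{lm 2.2} to the normalized flow via the spatially-constant rescaling $\bar{g}(t) = \psi(\tilde{t})\,\tilde{g}(\tilde{t})$. Since $\psi$ depends only on $t$, the Levi--Civita connections of $\bar{g}$ and $\tilde{g}$ coincide and a short index computation gives the scaling rules
\[
\|\bar{E}\|_{\bar{g}} = \psi^{-1}\|\tilde{E}\|_{\tilde{g}}, \quad \|\bar{\nabla}\bar{R}m\|_{\bar{g}} = \psi^{-3/2}\|\tilde{\nabla}\tilde{R}m\|_{\tilde{g}}, \quad \bar{R} = \psi^{-1}\tilde{R}, \quad \bar{r} = \psi^{-1}\tilde{r}.
\]
The problem thereby reduces to bounding the unnormalized quantities by low enough powers of $\tilde{R}$ and then dividing by sufficiently many powers of $\psi$.

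For the $\bar{E}$-estimate I would first split
\[
\bar{E}_{\alpha\beta\gamma\delta} = \overset{\circ}{\bar{R}}_{\alpha\beta\gamma\delta} + \frac{\bar{R}-\bar{r}}{n(n+1)}\bigl(\bar{g}_{\alpha\gamma}\bar{g}_{\beta\delta}-\bar{g}_{\alpha\delta}\bar{g}_{\beta\gamma}\bigr),
\]
so that $\|\bar{E}\|_{\bar{g}}^2 \leq 2\|\overset{\circ}{\bar{R}}m\|_{\bar{g}}^2 + \frac{4(\bar{R}-\bar{r})^2}{n(n+1)}$. The tracefree part is handled directly by Lemma~\ref{lm 2.1} combined with Remark~\ref{rem 2.1}: the unnormalized bound $\|\overset{\circ}{\tilde{R}}m\|^2 \leq \varepsilon_0^2\,\tilde{R}^{2-\delta_0}$ rescales to $\|\overset{\circ}{\bar{R}}m\|_{\bar{g}}^2 \leq \varepsilon_0^2 \psi^{-\delta_0}\bar{R}^{2-\delta_0}$. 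The sectional-curvature pinching of Lemma~\ref{lm 2.1} is preserved by the flow and, for $\varepsilon_0$ small, keeps $\bar{R}$ in a universal positive interval; integrating $\frac{d\ln\psi}{dt} = \frac{2\bar{r}}{n+1}$ then yields $\psi(t) \geq e^{2\lam_0 t/(n+1)}$ with $\lam_0 = \lam_0(n) > 0$. Together these produce $\|\overset{\circ}{\bar{R}}m\|_{\bar{g}} \leq \bar{C}\varepsilon_0 e^{-\lam t}$ with $\lam = \lam_0\delta_0/(n+1)$. The scalar oscillation $|\bar{R}-\bar{r}|$ is then absorbed by combining Lemma~\ref{lm 2.2} (rescaled to the normalized flow) with a Myers-type diameter bound from the preserved lower Ricci curvature, together with the evolution of $\bar{R}$ from Lemma~\ref{lm 2.3}(3).

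For $\|\bar{\nabla}\bar{R}m\|$, one needs an analogue of Lemma~\ref{lm 2.1} at the level of the gradient, namely $\|\tilde{\nabla}\tilde{R}m\|^2 \leq C(n)\varepsilon_0^2\,\tilde{R}^{3-\delta_1}$ for some $\delta_1 > 0$. I expect this to be derived by a maximum-principle argument on a quantity such as $\|\tilde{\nabla}\tilde{R}m\|^2 + K\|\overset{\circ}{\tilde{R}}m\|^2 \tilde{R}$ using the evolution identity of Lemma~\ref{lm 2.3}(1); the computation parallels that of Lemma~\ref{lm 2.2} while tracking the improved $\varepsilon_0^2$-factor inherited from Remark~\ref{rem 2.1}. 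Rescaling then produces the claimed exponential decay exactly as in the previous step. The \emph{main obstacle} is the scalar-oscillation step: Lemma~\ref{lm 2.1} only controls the tracefree part of $\bar{R}m$ pointwise, so linking $|\bar{R}-\bar{r}|$ back to the already-decaying tracefree quantities requires carefully combining a rescaled gradient bound for $\bar{R}$ with an intrinsic diameter bound, both of which must themselves be extracted from (the preserved form of) the initial pinching $(\ref{1.3})$.
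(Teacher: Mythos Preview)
Your overall plan---rescale, split $\bar{E}$ into the tracefree piece plus the scalar oscillation $\bar{R}-\bar{r}$, control the former via Lemma~\ref{lm 2.1}, and close the latter with a gradient-plus-diameter argument---is exactly the skeleton the paper uses. The tracefree step and the exponential growth of $\psi$ you describe match the paper's derivation of~(\ref{2.5}).

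However, your treatment of the scalar oscillation has a genuine gap, and the paper's route differs in two technical points.

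First, Lemma~\ref{lm 2.2} is not strong enough for what you want. Rescaled, it reads $\|\bar{\nabla}\bar{R}\|^2\leq \eta\bar{R}^3+C(\eta)\psi^{-3}$; the second term decays exponentially but the first is merely \emph{small}, not $O(\varepsilon_0 e^{-\lam t})$. Integrating over a bounded diameter therefore yields only $|\bar{R}-\bar{r}|\lesssim\eta^{1/2}$ for each $\eta$, which does not recover the $\varepsilon_0 e^{-\lam t}$ prefactor required. The paper avoids this by reversing your order: it first establishes the full gradient decay $\|\bar{\nabla}\bar{R}m\|^2\leq C\varepsilon_0^2 e^{-2\lam_1 t}$ (inequality~(\ref{2.14})) and only afterwards combines it with a diameter bound to control $|\bar{R}-\bar{r}|$. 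That gradient decay is obtained not by an unnormalized pointwise estimate of the form $\|\tilde{\nabla}\tilde{R}m\|^2\leq C\varepsilon_0^2\tilde{R}^{3-\delta_1}$, but by a maximum-principle argument directly on the normalized flow applied to the \emph{three}-term quantity
\[
F=\|\bar{\nabla}\bar{R}m\|^2+C_n\Bigl(\|\bar{R}m\|^2-\tfrac{2}{n(n+1)}\bar{R}^2\Bigr)+C_n^2\Bigl(\|\bar{R}ic\|^2-\tfrac{1}{n+1}\bar{R}^2\Bigr).
\]
The tracefree-Ricci summand is essential: via Huisken's inequality $\|\bar{\nabla}\bar{R}ic\|^2-\tfrac{1}{n+1}\|\bar{\nabla}\bar{R}\|^2\geq c_n\|\bar{\nabla}\bar{R}\|^2$, its evolution contributes a good $-c\|\bar{\nabla}\bar{R}\|^2$ that absorbs the bad $+\|\bar{\nabla}\bar{R}\|^2$ appearing in the evolution of the first two summands (equations~(\ref{2.11})--(\ref{2.13})), producing $\partial_tF\leq\bar{\Delta}F-C_nF+C\varepsilon_0 e^{-\delta_0 t}$. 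Your proposed two-term combination $\|\tilde{\nabla}\tilde{R}m\|^2+K\|\overset{\circ}{\tilde{R}}m\|^2\tilde{R}$ lacks this mechanism.

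Second, two auxiliary inputs you pass over are handled differently. The uniform bound $\bar{R}\leq C_n$ is not a direct consequence of the pinching in Lemma~\ref{lm 2.1}; the paper devotes its Step~1 to a separate contradiction argument (Lemma~\ref{lm 2.2} plus Myers plus fixed volume). And for the diameter the paper does not use Myers but rather Perelman's $\mu$-functional and no-local-collapsing to obtain an injectivity-radius lower bound, from which the diameter bound follows since the volume is fixed. Your Myers route could also work, but it requires a uniform \emph{lower} bound on $\bar{R}$ (hence on Ricci), which is an additional step not supplied by the lemmas you invoke.
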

\begin{proof} By the evolution equations,
\begin{align*}
\frac{d\bar{r}}{dt}=&\frac{\int\frac{\partial\bar{R}}{\partial t}d\mu}{\int d\mu}
=2(\frac{\int\|\bar{R}ic\|^2d\mu}{\int d\mu}-\frac{\bar{r}^2}{n+1})\\
\geq&\frac{2}{n+1}(\frac{\int\bar{R}^2d\mu}{\int d\mu}-\bar{r}^2)\geq0
\end{align*}
so
\begin{equation}\label{2.2}
n(n+1)(1-\varepsilon_0)\leq\bar{r}(0)\leq \bar{r}(t), \ t\in[0,\infty)
\end{equation}
Using the upper bound for the sectional curvature of $(N^{n+1},\bar{g}_0)$ and
Klingenberg's Lemma (Theorem 5.10 of \cite{CE}), the injectivity radius $r_0$
of $(N^{n+1},\bar{g}_0)$ satisfies
$r_0\geq \frac{\pi}{\sqrt{1+\varepsilon_0}}$.
Let $\omega_{n+1}$ be the volume of unit sphere $S^{n+1}$.
Then the volume comparison theorem implies
$V\geq \omega_{n+1}(1+\varepsilon_0)^{-\frac{n+1}{2}}$.
Since $(N^{n+1},\bar{g}(t))$ converges to $(N^{n+1},\bar{g}_{\infty})$
with constant curvature $K_{\infty}$, thus
\[\omega_{n+1}(1+\varepsilon_0)^{-\frac{n+1}{2}}\leq V=
V_{\infty}=\omega_{n+1}K_{\infty}^{-\frac{n+1}{2}},\]
which follows by
\begin{equation}\label{2.3}
\bar{r}(t)\leq \bar{r}_{\infty}= n(n+1)K_{\infty}\leq n(n+1)(1+\varepsilon_0)
\end{equation}
As
\[\frac{d \ln \phi(t)}{dt}=\frac{d \ln \psi(\tilde{t})}{dt}=\frac{2}{n+1}\bar{r}(t)\geq 2,\]
we have
\[\phi(t)\geq\phi(0)e^{2t}=\psi(0)e^{2t}\]
By Lemma \ref{lm 2.2},
\[\|\bar{\nabla}\bar{R}\|\leq \eta\bar{R}^{\frac{3}{2}}+C(\eta)\phi(t)^{-\frac{3}{2}}\leq\eta\bar{R}^{\frac{3}{2}}+C(\eta)\]
\textit{Step 1. } We first show there exists a constant $C_n$ depending only on $n$ such that
for any initial metric $\bar{g}_0$ satisfying $(1.3)$, the corresponding normalized Ricci flow $(N^{n+1},\bar{g}(t))$
satisfies
\begin{equation}\label{2.4}
\bar{R}(x,t)\leq C_n, \ \forall \  (x,t)\in N^{n+1}\times[0,\infty)
\end{equation}

We show  this by a contradiction argument.
Suppose not, then there exist a sequence of metrics $\bar{g}_k$ satisfying $(1.3)$, $x_k\in N^{n+1}$ and $t_k>0$ such that
$A_k=\bar{R}_k(x_k,t_k)\rightarrow \infty$ as $k\rightarrow \infty$.
For any $\eta>0$, there exists an integer $k$, such that the metric $\bar{g}_k(t)$ satisfies
\[\|\bar{\nabla}\bar{R}\|(\cdot,t_k)\leq 2\eta A_k^{\frac{3}{2}}\]
Now for any point $y$ with $d_{\bar{g}_k(t_k)}(y,x_k)\leq \frac{1}{\sqrt{\eta A_k}}$, we have
\[\bar{R}_k(y)\geq A_k-2d_{\bar{g}_k(t_k)}(x,x_k)\eta A_k^{\frac{3}{2}}\geq(1-2\sqrt{\eta})A_k\]
Then by Lemma 2.1, the sectional curvature $\bar{K}(y,t_k)$ of $(N^{n+1},\bar{g}_k(t_k))$
satisfies
\[\bar{K}(y,t_k)\geq \frac{1-2\sqrt{\eta}}{2n(n+1)}A_k\]

   On the other hand, by Myers' theorem,
any geodesic from $x_k$ with length larger than $\frac{2(n+1)\pi}{\sqrt{(1-2\sqrt{\eta})A_k}}$
must have conjugate points. Thus by choosing $\eta< \frac{1}{8(n+1)^2\pi}$ and $k$ large enough, for any $x\in (N^{n+1},\bar{g}_k(t_k))$,  we have
\[\bar{K}(x,t_k)\geq \frac{1-2\sqrt{\eta}}{2n(n+1)}A_k.\]
Hence $\text{Vol}(N^{n+1}, \bar{g}_k({t_k}))\rightarrow 0$ as
$k \rightarrow \infty$, which contradicts with the fact that
$(N^{n+1}, \bar{g}_k(t))$ has constant volume $V\geq\omega_{n+1}(1+\varepsilon_0)^{-\frac{n+1}{2}}$.

\textit{Step 2. } We next show the exponentially decreasing of
$\|\bar{\nabla}\bar{R}m\|$ under the normalized Ricci flow.

Let $C_n$ denote the universal constants depending only on $n$. By Lemma 2.1 and $(2.4)$,
\begin{equation}\tag{2.5}\label{2.5}
\|\overset{\circ}{\bar{R}}m\|^2=\|\bar{R}m\|^2-\frac{2\bar{R}^2}{n(n+1)}
\leq C_0\bar{R}^{2-\delta_0}\phi(t)^{-\delta_0}\leq C_n\varepsilon_0^2e^{-2\delta_0t}
\end{equation}
Let $f=\|\bar{R}m\|^2-\frac{2\bar{R}^2}{n(n+1)}$.
By Lemma \ref{lm 2.3},
\begin{equation}\tag{2.6}\label{2.6}
\frac{\partial}{\partial t}f\leq \bar{\Delta}f-2\|\bar{\nabla}\bar{R}m\|^2+
\frac{4\|\bar{\nabla}\bar{R}\|^2}{n(n+1)}+ 4\bar{Q}_{\alpha\beta\gamma\delta}
\bar{R}_{\alpha\beta\gamma\delta}-\frac{8\bar{R}}{n(n+1)}\|\bar{R}ic\|^2,
\end{equation}
By
\begin{align*}
\bar{Q}_{\alpha\beta\gamma\delta}\bar{R}_{\alpha\beta\gamma\delta}\leq&
\|\bar{Q}_{\alpha\beta\gamma\delta}(\bar{R}_{\alpha\beta\gamma\delta}-
\frac{\bar{R}}{n(n+1)}(\bar{g}_{\alpha\gamma}\bar{g}_{\beta\delta}-
\bar{g}_{\alpha\delta}\bar{g}_{\beta\gamma}))\|\nonumber\\
&+\frac{\bar{R}}{n(n+1)}\bar{Q}_{\alpha\beta\gamma\delta}
(\bar{g}_{\alpha\gamma}\bar{g}_{\beta\delta}-
\bar{g}_{\alpha\delta}\bar{g}_{\beta\gamma})\tag{2.7}\label{2.7}
\end{align*}
An easy  calculation shows
\begin{equation}\tag{2.8}\label{2.8}
\bar{Q}_{\alpha\beta\gamma\delta}
(\bar{g}_{\alpha\gamma}\bar{g}_{\beta\delta}-
\bar{g}_{\alpha\delta}\bar{g}_{\beta\gamma})
=2(\|\bar{R}m\|^2+\|\bar{R}ic\|^2)-4\bar{R}_{\alpha\beta\gamma\theta}
\bar{R}_{\gamma\beta\alpha\theta}
\end{equation}
By taking $C_n$ large enough, we have
\begin{align*}
\bar{R}_{\alpha\beta\gamma\theta}
\bar{R}_{\gamma\beta\alpha\theta}\geq&
\frac{\bar{R}}{n(n+1)}(\bar{g}_{\alpha\gamma}\bar{g}_{\beta\theta}-
\bar{g}_{\alpha\theta}\bar{g}_{\beta\gamma})\bar{R}_{\gamma\beta\alpha\theta}\nonumber\\
&-\|\frac{\bar{R}}{n(n+1)}(\bar{g}_{\alpha\gamma}\bar{g}_{\beta\theta}-
\bar{g}_{\alpha\theta}\bar{g}_{\beta\gamma})-\bar{R}_{\alpha\beta\gamma\theta}\|
\|\bar{R}_{\gamma\beta\alpha\theta}\|\nonumber\\
\geq&\frac{\bar{R}^2}{n(n+1)}-C_n\varepsilon_0e^{-\delta_0t}\tag{2.9}\label{2.9}
\end{align*}
Substituting (\ref{2.8}) and (\ref{2.9}) into (\ref{2.7}) gives
\begin{equation}\tag{2.10}\label{2.10}
\bar{Q}_{\alpha\beta\gamma\delta}\bar{R}_{\alpha\beta\gamma\delta}\leq
\frac{2\bar{R}\|\bar{R}ic\|^2}{n(n+1)}+\frac{2\bar{R}}{n(n+1)}
(\|\bar{R}m\|^2-\frac{2\bar{R}^2}{n(n+1)})+
C_n\varepsilon_0e^{-\delta_0t}
\end{equation}
Combining (\ref{2.6}) and (\ref{2.10}), we get
\begin{equation}\tag{2.11}\label{2.11}
\frac{\partial}{\partial t}f\leq \bar{\Delta}f-2\|\bar{\nabla}\bar{R}m\|^2+
\frac{4\|\bar{\nabla}\bar{R}\|^2}{n(n+1)}+C_n\varepsilon_0e^{-\delta_0t}
\end{equation}
By Lemma 4.3 in \cite{Hu2},
\[\|\bar{\nabla}\bar{R}ic\|^2-\frac{\|\bar{\nabla}\bar{R}\|^2}{n+1}\geq
(\frac{3n+1}{2n(n+3)}-\frac{1}{n+1})\|\bar{\nabla}\bar{R}\|^2
=\frac{(n-1)^2}{2n(n+1)(n+3)}\|\bar{\nabla}\bar{R}\|^2
\]
Using Lemma 2.3, we have
\begin{align*}
\frac{\partial}{\partial t}(\|\bar{R}ic\|^2-\frac{\|\bar{R}\|^2}{n+1})\leq&
\bar{\Delta}(\|\bar{R}ic\|^2-\frac{\|\bar{R}\|^2}{n+1})-
2(\|\bar{\nabla}\bar{R}ic\|^2-\frac{\|\bar{\nabla}\bar{R}\|^2}{n+1})\\
&+4(\bar{R}_{\alpha\beta}-\frac{\bar{R}}{n+1}\bar{g}_{\alpha\beta})
\bar{R}_{\gamma\delta}\bar{R}_{\alpha\gamma\beta\delta}\\
\leq&\bar{\Delta}(\|\bar{R}ic\|^2-\frac{\|\bar{R}\|^2}{n+1})
-\frac{(n-1)^2}{n(n+1)(n+3)}\|\bar{\nabla}\bar{R}\|^2
+C_n\varepsilon_0^2e^{-\delta_0t},\tag{2.12}\label{2.12}
\end{align*}
where we have used the fact
\[(\bar{R}_{\alpha\beta}-\frac{\bar{R}}{n+1}\bar{g}_{\alpha\beta})
\bar{R}_{\gamma\delta}\bar{R}_{\alpha\gamma\beta\delta}
\leq\bar{R}(\|\bar{R}ic\|^2-\frac{\bar{R}^2}{n+1})\leq C_n\varepsilon_0^2e^{-\delta_0t}\]
In additional,
\begin{equation}\tag{2.13}\label{2.13}
\frac{\partial }{\partial t}\|\bar{\nabla}\bar{R}m\|^2
\leq\bar{\Delta}\|\bar{\nabla}\bar{R}m\|^2-2\|\bar{\nabla}^2\bar{R}m\|^2+
C_n\|\bar{\nabla}\bar{R}m\|^2
\end{equation}
Now let
\[F=\|\bar{\nabla}\bar{R}m\|^2+C_nf+C_n^2(\|\bar{R}ic\|^2-\frac{\|\bar{R}\|^2}{n+1})
\]
Combining (\ref{2.11}), (\ref{2.12}) and (\ref{2.13}) gives
\[\frac{\partial F}{\partial t}\leq\bar{\Delta}F-C_nF+2C_n^3\varepsilon_0e^{-\delta_0 t} \]
Since $F(\cdot,0)\leq 3C_n^3\varepsilon_0^2$,  the standard maximum principle implies that
 there exists a constant $\lambda_1$ depending only on $n$
such that
\begin{equation}\tag{2.14}\label{2.14}
\|\bar{\nabla}\bar{R}m\|^2\leq C\varepsilon_0^2e^{-2\lambda_1t}.
\end{equation}

\textit{Step 3.}
We want to get an uniformly upper bound for the diameter of $(N^{n+1},\bar{g}(t))$ under the normalized Ricci flow.

Consider Perelman's $\mathcal{W}$-functional \cite{Pe},
\[\mathcal{W}(\tilde{g},f,\tau)=\int_{N^{n+1}}
[\tau(\tilde{R}+\|\tilde{\nabla}f\|^2)+f-(n+1)](4\pi\tau)^{-\frac{n+1}{2}}e^{-f}d\mu\]
where $f$ is a smooth function on $N^{n+1}$, and $\tau$ is a positive scale parameter.
Let
\[\rho=(4\pi\tau)^{-\frac{n+1}{4}}e^{-\frac{f}{2}}\]
Now we set
\[\mu(\tilde{g},\tau)=\inf\{\mathcal{W}(\tilde{g},f,\tau)
\|\rho\in C^{\infty}(N^{n+1}),\int_{N^{n+1}}\rho^2d\mu=1\}\]
By our assumption for the initial metric $\bar{g}_0$ and the Theorem A in \cite{Ye},
\[\mu(\tilde{g}(0),\tau)\geq -CT-C,\ \tau\in(0,2T]\]
where $C$ is a constant depending only on $n$, and $T<\frac{n+1}{2\tilde{R}_{\min}(0)}$
is the maximal existence time  for the (unnormalized) Ricci flow.
Thus
\[\mu(\tilde{g}(0),\tau)\geq -C_n,\  \forall \tau\in(0, 2T]\]
Now combining the upper bound for the scalar curvature of $\bar{g}(\cdot,t)$,
Perelman's no local collapsing theorem $I'$ \cite{Pe}, and a local injectivity radius estimate of
Cheeger-Gromov-Taylor \cite{CGT}, we can get the following
\begin{prop}
There exists a constant $c_n> 0$ depending on $n$, such that
\[\text{inj}(N^{n+1}, \bar{g}(t))\geq c_n,\ \forall\  t\in[0,\infty)\]
\end{prop}
As $(N^{n+1}, \bar{g}(t))$ has constant volume, it follows that diameter of $(N^{n+1}, \bar{g}(t))$
has a uniformly upper bound
\begin{equation}\tag{2.15}\label{2.15}
\text{diam}(N^{n+1},\bar{g}(t))\leq C_n, \ \forall \  t\in[0,\infty)
\end{equation}
Combining (\ref{2.5}), (\ref{2.14})  and (\ref{2.15}) yields the desired estimate.
\end{proof}
\begin{rem}\label{rem 2.2}
Once getting the exponential decay of $\|\overset{\circ}{\bar{R}}m\|^2$ and
$\|\bar{\nabla}\bar{R}m\|^2$, one can show $\|\bar{\nabla}^k\bar{R}m\|^2$
are also exponentially decreasing, see \cite{Ha} for details.
\end{rem}
\begin{rem}\label{rem 2.3}
Note that from (\ref{2.2}) and (\ref{2.3}), we have derived the uniform bound for $\bar{r}(t)$,
\begin{equation}\tag{2.16}\label{2.16}
n(n+1)(1-\varepsilon_0)\leq\bar{r}(t)\leq n(n+1)(1+\varepsilon_0),\ \ t\in[0,\infty)
\end{equation}
\end{rem}

We denote by $S=\{S_i\}$ the vector with components $S_i=\bar{R}_{0i}$,
the following estimate  was derived by Huisken in \cite{Hu3}.
\begin{lm}\label{lm 2.4}
For any $\eta>0$,
\[\|\nabla A\|^2\geq(\frac{3}{n+2}-\eta)\|\nabla H\|^2-\frac{2}{n+2}
(\frac{2}{n+2}\eta^{-1}-\frac{n}{n-1})\|S\|^2\]
\end{lm}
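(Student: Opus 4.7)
The plan is to follow Huisken's argument from \cite{Hu3}, tracking the ambient-curvature error that enters the Codazzi--Mainardi identity when $(N^{n+1},\bar{g})$ is not a space form. Write $T_{ijk}=\nabla_i h_{jk}$, which is symmetric in $j,k$. The Codazzi--Mainardi equation $T_{ijk}-T_{jik}=\bar{R}_{ijk0}$ shows that every failure of $T$ to be totally symmetric is controlled by the normal--tangential components $\bar{R}_{0\alpha\beta\gamma}$ of the ambient Riemann tensor, whose only nontrivial tangential trace is $S$; tracing Codazzi gives $g^{jk}\nabla_j h_{ik}=\nabla_i H-S_i$ with the sign fixed by the chosen normal convention.

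The first step is the canonical orthogonal decomposition $T_{ijk}=E_{ijk}+F_{ijk}$, with
\[E_{ijk}=\tfrac{1}{3}(T_{ijk}+T_{jki}+T_{kij}),\qquad F_{ijk}=\tfrac{1}{3}(\bar{R}_{0kij}+\bar{R}_{0jik}),\]
the first totally symmetric and the second the ``hook'' piece. The first Bianchi identity for $\bar{R}m$ forces the totally symmetric part of $F$ to vanish, so $\langle E,F\rangle=0$ pointwise and $\|\nabla A\|^2=\|E\|^2+\|F\|^2$. Tracing the symmetrization formula then gives $g^{jk}E_{ijk}=\nabla_i H-\tfrac{2}{3}S_i$, while the several contractions of $F$ are all multiples of $S$.

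Next I would invoke the pointwise Kato-type inequality for totally symmetric $3$-tensors on an $n$-dimensional inner product space, $\|E\|^2\ge\frac{3}{n+2}\|g^{jk}E_{ijk}\|^2$, whose extremizers are $\frac{1}{n+2}(v_i g_{jk}+v_j g_{ik}+v_k g_{ij})$. Substituting the Codazzi-corrected trace, expanding $\|\nabla H-\tfrac{2}{3}S\|^2$, and splitting the cross term $\langle\nabla H,S\rangle$ by AM--GM with parameter $a>0$ related to $\eta$ via $\eta=\tfrac{2a}{n+2}$ yields
\[\|E\|^2\ge\Bigl(\tfrac{3}{n+2}-\eta\Bigr)\|\nabla H\|^2-\tfrac{4}{(n+2)^2\eta}\|S\|^2+\tfrac{4}{3(n+2)}\|S\|^2.\]
The gap between the $\tfrac{4}{3(n+2)}$ coefficient and the advertised $\tfrac{2n}{(n-1)(n+2)}$ (equivalently, between $\tfrac{2}{3}$ and $\tfrac{n}{n-1}$ inside the stated bracket) is exactly $\tfrac{2}{3(n-1)}\|S\|^2$, and it is supplied by not discarding $\|F\|^2$: the orthogonal projection of $F$ onto the hook-symmetric pure-trace subspace is
\[G^{*}_{ijk}=\tfrac{1}{3(n-1)}\bigl(2S_i g_{jk}-S_j g_{ik}-S_k g_{ij}\bigr),\]
and a direct calculation gives $\|G^{*}\|^2=\tfrac{2}{3(n-1)}\|S\|^2$, so $\|F\|^2\ge\|G^{*}\|^2$. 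Adding these contributions turns the $\|S\|^2$ coefficient into $-\tfrac{2}{n+2}\bigl(\tfrac{2}{(n+2)\eta}-\tfrac{n}{n-1}\bigr)$ exactly as stated.

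The main obstacle is purely combinatorial: one must keep sign conventions, index positions and normalization factors consistent through the Codazzi trace, the symmetrization defining $E$, the explicit projection $G^{*}$, and the AM--GM step, so that the final coefficients collapse to the clean form stated in the lemma. No analytic input beyond the pointwise Codazzi identity and the elementary Kato-type algebraic inequality is required.
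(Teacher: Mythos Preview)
Your proposal is correct and reproduces exactly Huisken's original argument (Lemma~2.2 of \cite{Hu3}): the paper itself does not prove this lemma but simply cites \cite{Hu3}, so there is no ``paper's own proof'' to compare against. Your symmetrization $E$, the Kato inequality $\|E\|^2\ge\frac{3}{n+2}\|\mathrm{tr}\,E\|^2$, the AM--GM splitting of the cross term, and the recovery of the residual $\frac{2}{3(n-1)}\|S\|^2$ from the pure-trace projection $G^{*}$ of $F$ are precisely the ingredients Huisken uses, with the same constants.
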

\quad By a direct calculation or using the results in \cite{Lo}, we could establish the following evolution equations
\begin{lm}\label{lm 2.5}
\begin{align*}
&(1)\frac{\partial g_{ij}}{\partial t}=-2Hh_{ij}-2\bar{R}_{ij}+\frac{2}{n+1}\bar{r}g_{ij},\\
&(2)\frac{\partial h_{ij}}{\partial t}=\Delta
h_{ij}-2Hh_{ip}h_{jp}+\left\vert A\right\vert ^{2}h_{ij}+\frac{\bar{r}}{n+1}h_{ij}+P_{ij}
-\bar{
\nabla}_{0}\bar{R}_{0i0j},\\
&(3)\frac{\partial H}{\partial t}=\Delta H+\|A\|^{2}H+2\bar{R}_{ij}h_{ij}
-\frac{\bar{r}H}{n+1}-\bar{\nabla}_{0}\bar{R}_{00},\\
&(4)\frac{\partial\| A\|^{2}}{\partial t}=\Delta
\|A\|^2-2\|\nabla A\|^2+2\|A\|^{4}+2P_{ij}h_{ij}
+4\bar{R}_{ij}h_{ik}h_{jk}-\frac{2\bar{r}\left\vert A\right\vert ^{2}}{n+1}
-2\bar{\nabla}_{0}\bar{R}_{0i0j}h_{ij}.
\end{align*}
\text{Here}$\ P_{ij}=2h_{kl}\bar{R}_{kilj}-h_{il}\bar{R}_{jklk}-h_{jl}\bar{R}_{iklk}$.
\end{lm}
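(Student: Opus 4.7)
The plan is to compute each evolution equation by decomposing the $t$-derivative into two independent contributions: the mean-curvature-flow motion of the embedding $X$ with the ambient metric $\bar{g}$ held fixed, and the normalized Ricci flow of $\bar{g}(t)$ with $X$ held fixed. The two contributions add linearly because $g_{ij}$, $h_{ij}$, $H$, and $\|A\|^2$ depend separately on $X$ and on $\bar{g}$. The MCF-only piece reproduces Huisken's original formulas (cf.\ \cite{Hu1}), while the Ricci-flow-only piece can be read off from the standard variation formulas for induced hypersurface geometry under a time-dependent ambient metric, as carried out by Lott in \cite{Lo}.

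For (1), differentiating $g_{ij}=\bar{g}(e_i,e_j)$ produces the sum of the pullback of $\partial_t\bar{g}=-2\overline{Ric}+\frac{2\bar{r}}{n+1}\bar{g}$ to $TM$, namely $-2\bar R_{ij}+\frac{2\bar{r}}{n+1}g_{ij}$, and of the classical $-2Hh_{ij}$ coming from $\partial_tX=-H\nu$ together with the Weingarten formula. For (2), start from $h_{ij}=\bar{g}(\bar\nabla_{e_i}e_j,\nu)$: the embedding variation with $\bar{g}$ frozen yields Huisken's expression $\Delta h_{ij}-2Hh_{ip}h_{jp}+\|A\|^2h_{ij}+P_{ij}$ via the Simons identity together with the Gauss and Codazzi equations, while the Ricci-flow variation with $X$ frozen contributes $\frac{\bar{r}}{n+1}h_{ij}-\bar\nabla_0\bar R_{0i0j}$; the first term comes from the renormalization of $\nu$ needed to maintain $\|\nu\|_{\bar g(t)}=1$ against the trace piece $\frac{2\bar{r}}{n+1}\bar g$ of the Ricci flow, and the second from combining the time-variation of the ambient Christoffels (linear in $\bar\nabla\overline{Ric}$) with the $[\partial_t,\bar\nabla]$ commutator and simplifying by the second Bianchi identity. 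Formulas (3) and (4) then follow by algebra from (1) and (2): using $\partial_tg^{ij}=-g^{ik}g^{jl}\partial_tg_{kl}$, contracting $H=g^{ij}h_{ij}$ gives (3), with the two $\frac{\bar{r}}{n+1}$-contributions assembling into $-\frac{\bar rH}{n+1}$ and the $P_{ij}$-trace combining with the Ricci piece of $\partial_tg^{ij}$ to produce $2\bar R_{ij}h_{ij}$; the same calculation applied to $\|A\|^2=g^{ij}g^{kl}h_{ik}h_{jl}$ yields (4), where the Simons identity provides the $-2\|\nabla A\|^2$ term.

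I expect the main obstacle to be the careful bookkeeping in (2): distinguishing the contributions to the time derivative of $\bar\nabla_{e_i}e_j$ arising from Christoffel variation, from the $[\partial_t,\bar\nabla]$ commutator, and from the renormalization of $\nu$, and then collapsing them via the second Bianchi identity into the single clean expression $-\bar\nabla_0\bar R_{0i0j}$. Sign conventions for the outer normal and for the ambient Ricci terms must be tracked consistently throughout. As the paper already notes, an alternative is simply to invoke Lott's framework in \cite{Lo}, which derives (1)--(4) via the first variation of area in a time-dependent background and bypasses the tedious direct computation.
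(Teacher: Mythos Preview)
Your proposal is correct and matches the paper's approach: the paper itself offers no detailed proof, stating only that the equations follow ``by a direct calculation or using the results in \cite{Lo}''. Your decomposition into the MCF contribution (Huisken's formulas from \cite{Hu1,Hu3}) plus the ambient-metric-variation contribution (Lott's framework \cite{Lo}), followed by contraction to obtain (3) and (4), is precisely that direct calculation spelled out.
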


For simplicity, we will use  the following denotation throughout the paper,
\[u=2\bar{R}_{ij}h_{ij}-\frac{\bar{r}H}{n+1}-\bar{\nabla}_0\bar{R}_{00}\]
\[v=2P_{ij}h_{ij}+4\bar{R}_{ij}h_{ik}h_{jk}-\frac{2\bar{r}\|A\|^2}{n+1}-2\bar{\nabla}_0\bar{R}_{0i0j}h_{ij}\]
Now we choose $\varepsilon_0$ small, such that
\begin{equation}\tag{2.17}\label{2.17}
\varepsilon_1=(\bar{C}+1)\varepsilon_0\leq\frac{1}{2^7n}
\end{equation}
By
\begin{equation}\tag{2.18}\label{2.18}
\|\bar{E}\|=\|\bar{R}_{\alpha\beta\gamma\delta}-
\frac{\bar{r}}{n(n+1)}(\bar{g}_{\alpha\gamma}\bar{g}_{\beta\delta}
-\bar{g}_{\alpha\delta}\bar{g}_{\beta\gamma})\|
\leq \bar{C}\varepsilon_0 e^{-\lambda t},
\end{equation}
it follows the sectional curvature $\bar{K}(x,t)$
satisfies
\[\frac{\bar{r}}{n(n+1)}-\bar{C}\varepsilon_0 e^{-\lambda t}
\leq \bar{K}(x,t)\leq \frac{\bar{r}}{n(n+1)}+\bar{C}\varepsilon_0 e^{-\lambda t}\]
Taking the trace on $\beta$ and $\delta$ in  (\ref{2.18})  gives
\begin{equation}\tag{2.19}\label{2.19}
\|\bar{R}_{ij}-\frac{\bar{r}}{n+1}\bar{g}_{ij}\|\leq (n+1)\bar{C}\varepsilon_0 e^{-\lambda t}
\end{equation}
At any point $x\in M_t$, we choose an orthonormal
basis $\{e_1,\cdots,e_n\}$ such that $g_{ij}=\delta_{ij}$, $h_{ij}=\kappa_i\delta_{ij}$,
then
\begin{align*}
P_{ij}h_{ij}=\sum_{i,p}2(\kappa_i\kappa_p-\kappa_i^2)\overline{R}_{pipi}
=-\sum_{i<p}2(\kappa_i-\kappa_p)^2\overline{R}_{pipi},
\end{align*}
and
\begin{equation}\tag{2.20}\label{2.20}
-2n\bar{C}\varepsilon_0e^{-\lambda t}\|A\|^2
\leq P_{ij}h_{ij}-\frac{2\bar{r}}{n+1}(\|A\|^2-\frac{H^2}{n})
\leq2n\bar{C}\varepsilon_0e^{-\lambda t}\|A\|^2
\end{equation}
By
\[
-\|(\bar{R}_{ij}-\frac{\bar{r}}{n+1}\bar{g}_{ij})h_{ik}h_{jk}\|
\leq \bar{R}_{ij}h_{ik}h_{jk}-\frac{\bar{r}}{n+1}\bar{g}_{ij}h_{ik}h_{jk}
\leq \|(\bar{R}_{ij}-\frac{\bar{r}}{n+1}\bar{g}_{ij})h_{ik}h_{jk}\|\]
we get
\begin{equation}\tag{2.21}\label{2.21}
-(n+1)\bar{C}\varepsilon_0 e^{-\lambda t}\|A\|^2
\leq \bar{R}_{ij}h_{ik}h_{jk}-\frac{\bar{r}}{n+1}\|A\|^2
\leq (n+1)\bar{C}\varepsilon_0 e^{-\lambda t}\|A\|^2
\end{equation}
Now we have
\begin{align*}
v\leq&\frac{-4\bar{r}}{n+1}(\|A\|^2-\frac{H^2}{n})+4n\bar{C}\varepsilon_0e^{-\lambda t}\|A\|^2
+\frac{4\bar{r}}{n+1}\|A\|^2+4(n+1)\bar{C}\varepsilon_0e^{-\lambda t}\|A\|^2\\
&-\frac{2\bar{r}}{n+1}\|A\|^2+2n\bar{C}\varepsilon_0e^{-\lambda t}\|A\|\\
\leq&\frac{-2\bar{r}}{n+1}\|A\|^2+\frac{4\bar{r}}{n(n+1)}H^2+(8n+4)\bar{C}\varepsilon_0\|A\|^2
+2n\bar{C}\varepsilon_0\|A\|^2\\
\leq&-2n(1-\varepsilon_0)\|A\|^2+4(1+\varepsilon_0)H^2+(8n+4)\bar{C}\varepsilon_0\|A\|^2\\
\leq&-2n\|A\|^2+4H^2+(2n+8n\bar{C}+4\bar{C})\varepsilon_0\|A\|^2+2n\bar{C}\varepsilon_0\|A\|\\
\leq&-2n\|A\|^2+4H^2+10n\varepsilon_1\|A\|^2+2n\varepsilon_1\|A\|\tag{2.22}\label{2.22}
\end{align*}
where we have used (\ref{2.16}) in the third inequality and (\ref{2.17}) in the last inequality.
Similarly, using
\[-2\|H(\bar{R}_{ij}-\frac{\bar{r}}{n+1}\bar{g}_{ij})h_{ij}\|
\leq2H(\bar{R}_{ij}-\frac{\bar{r}}{n+1}\bar{g}_{ij})h_{ij}
\leq2\|H(\bar{R}_{ij}-\frac{\bar{r}}{n+1}\bar{g}_{ij})h_{ij}\|\]
we have
\begin{equation}\tag{2.23}\label{2.23}
-2n(n+1)\bar{C}\varepsilon_0e^{-\lambda t}\|A\|^2\leq2H\bar{R}_{ij}h_{ij}
-\frac{2\bar{r}H^2}{n+1}\leq
2n(n+1)\bar{C}\varepsilon_0e^{-\lambda t}\|A\|^2
\end{equation}
Now it follows
\begin{align*}
uH\geq&\frac{\bar{r}}{n+1}H^2-2n(n+1)\bar{C}\varepsilon_0\|A\|^2-n^2\bar{C}\varepsilon_0\|A\|^2\\
\geq&nH^2-n\varepsilon_0H^2-2n(n+1)\bar{C}\varepsilon_0\|A\|^2-n^2\bar{C}\varepsilon_0\|A\|\\
\geq&nH^2-n^2\varepsilon_0\|A\|^2-2n(n+1)\bar{C}\varepsilon_0\|A\|^2-n^2\bar{C}\varepsilon_0\|A\|\\
\geq&nH^2-3n^2\varepsilon_1\|A\|^2-n^2\varepsilon_1\|A\|\tag{2.24}\label{2.24}
\end{align*}
\begin{lm}\label{lm 2.6}
Inequality (\ref{1.4}) is preserved under equation (1.2) for all times $0\leq t< T$, where $T$
is the maximal existence time of the solution to equation (1.2).
\end{lm}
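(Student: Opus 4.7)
The plan is to apply the parabolic maximum principle to $f := \|A\|^2 - \alpha_n H^2 - 1$. From Lemma \ref{lm 2.5} one finds
\begin{equation*}
\frac{\partial f}{\partial t} = \Delta f - 2\bigl(\|\nabla A\|^2 - \alpha_n\|\nabla H\|^2\bigr) + 2\|A\|^2\bigl(\|A\|^2 - \alpha_n H^2\bigr) + v - 2\alpha_n uH,
\end{equation*}
so it suffices to show that at any point where $f$ attempts to reach zero from below, the non-Laplacian terms on the right-hand side are strictly negative.

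A direct check shows $\alpha_n < \frac{3}{n+2}$ for every $n \geq 2$ (this is precisely why $\alpha_2$ is taken to be $11/16$ rather than the $n\geq 3$ formula, which at $n=2$ would give $4/5 > 3/4$). Thus Lemma \ref{lm 2.4} applied with $\eta$ so small that $\frac{3}{n+2} - \eta \geq \alpha_n$ gives $\|\nabla A\|^2 - \alpha_n\|\nabla H\|^2 \geq -C_n\|S\|^2$. Since $S_i = \bar{R}_{0i}$ and $\bar{g}_{0i} = 0$, estimate (\ref{2.19}) yields $\|S\|^2 \leq C_n\varepsilon_0^2 e^{-2\lambda t}$, so this gradient contribution is harmless.

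For the reaction, insert (\ref{2.22}) for $v$ and (\ref{2.24}) for $uH$ to obtain
\begin{equation*}
2\|A\|^2(\|A\|^2 - \alpha_n H^2) + v - 2\alpha_n uH \leq 2\|A\|^2(\|A\|^2 - \alpha_n H^2) - 2n\|A\|^2 + (4 - 2\alpha_n n)H^2 + \Lambda,
\end{equation*}
where $\Lambda \leq C_n\varepsilon_1(\|A\|^2 + \|A\|)$. At a point with $f = 0$ one has $\|A\|^2 - \alpha_n H^2 = 1$ and $H^2 = \alpha_n^{-1}(\|A\|^2 - 1)$. For $n \geq 3$ the choice $\alpha_n = 4/(4n-3)$ is engineered so that $(4 - 2\alpha_n n)/\alpha_n = 2n - 3$, whence the reaction collapses to
\begin{equation*}
2\|A\|^2 + 2(1-n)\|A\|^2 + (2n-3)(\|A\|^2 - 1) + \Lambda = -\|A\|^2 - (2n-3) + \Lambda;
\end{equation*}
the case $n = 2$ with $\alpha_2 = 11/16$ is analogous and gives $-\frac{2}{11}\|A\|^2 - \frac{20}{11} + \Lambda$. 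Because $\|A\|^2 \geq 1$ at such a point, the $\varepsilon_1\|A\|$ term in $\Lambda$ is dominated by $\varepsilon_1\|A\|^2$, and by (\ref{2.17}) the whole expression is strictly negative. The parabolic maximum principle then forbids $f$ from ever reaching zero, so (\ref{1.4}) is preserved on $[0,T)$.

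The main obstacle is the precise algebraic cancellation in the reaction: the value of $\alpha_n$ is chosen so that after substituting the boundary relation $\|A\|^2 = \alpha_n H^2 + 1$, the coefficient of $\|A\|^2$ becomes strictly negative (equal to $-1$ for $n \geq 3$) rather than merely nonpositive. The additive constant $1$ in (\ref{1.4}) then contributes the genuinely negative piece $-(2n - 3)$, which is what allows the small background-deviation errors $\Lambda$ to be absorbed once $\varepsilon_0$ is taken small.
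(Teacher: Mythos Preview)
Your argument is correct and follows essentially the same route as the paper: compute $\partial_t f$ from Lemma \ref{lm 2.5}, control the gradient term via Lemma \ref{lm 2.4}, bound the reaction via (\ref{2.22}) and (\ref{2.24}), and conclude by the maximum principle. The only cosmetic difference is that the paper packages the reaction as a global inequality $\partial_t f < \Delta f + 2(\|A\|^2 + 1 + 12\varepsilon_1 - n)f$, whereas you check negativity on the level set $f=0$; note also a slip in your displayed computation, where the middle term should read $-2n\|A\|^2$ rather than $2(1-n)\|A\|^2$ (your final value $-\|A\|^2 - (2n-3)$ is correct).
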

\begin{proof}  From Lemma \ref{lm 2.5}, we get
\begin{align*}
\frac{\partial}{\partial t}(\|A\|^2-\alpha_nH^2-1)
=&\Delta(\|A\|^2-\alpha_nH^2-1)-2(\|\nabla A\|^2-\alpha_n\|\nabla H\|^2)\nonumber\\
&+2\|A\|^2(\|A\|^2-\alpha_nH^2)+v-2\alpha_nuH\tag{2.25}\label{2.25}
\end{align*}
Combining (\ref{2.22}) and (\ref{2.24}) gives
\begin{equation}\tag{2.26}\label{2.26}
v-2\alpha_n uH\leq(22n\varepsilon_1-2n)\|A\|^2+(4-2n\alpha_n)H^2+6n\varepsilon_1\|A\|
\end{equation}
By taking $\eta=\frac{1}{2^5}$ for $n=2$ and $\eta=\frac{1}{8(n+2)}$ for $n\geq 3$ in Lemma 2.4, we have
\begin{equation}\tag{2.27}\label{2.27}
\|\nabla A\|^2\geq\alpha_n\|\nabla H\|^2-2^5n^2\varepsilon_1^2
\end{equation}
By substituting (\ref{2.26}), (\ref{2.27}) into (\ref{2.25}) , we get
\begin{align*}
\frac{\partial}{\partial t}(\|A\|^2-\alpha_nH^2-1)\leq&\Delta(\|A\|^2-\alpha_nH^2-1)
+2\|A\|^2(\|A\|^2-\alpha_nH^2-1)\\
&+(2+22n\varepsilon_1-2n)\|A\|^2+(4-2n\alpha_nH^2)+6n\varepsilon_1\|A\|+2^6n^2\varepsilon_1^2
\end{align*}
By $6n\varepsilon_1\|A\|\leq2n\varepsilon_1\|A\|^2+18n\varepsilon_1^2$ and the definition of $\alpha_n$,
 a direct computation shows
\begin{align*}
&(2+22n\varepsilon_1-2n)\|A\|^2+(4-2n\alpha_nH^2)+6n\varepsilon_1\|A\|+2^6n^2\varepsilon_1^2\\
<&-2(n-1-12\varepsilon_1)(\|A\|^2-\alpha_nH^2-1)
\end{align*}
where we have used $\varepsilon_1\leq\frac{1}{2^7n}$. Hence
\[\frac{\partial}{\partial t}(\|A\|^2-\alpha_nH^2-1)<\Delta(\|A\|^2-\alpha_nH^2-1)+2(\|A\|^2+1+12\varepsilon_1-n)
(\|A\|^2-\alpha_nH^2-1)\]
By the maximum principle, we get the desired inequality.
\end{proof}

\section{A Pinching estimate}
In this section we want to show how the eigenvalues of the second fundamental form
close to each other when the time becomes large or  the mean curvature blows up.

\begin{thm}\label{thm3.1}
There exist constants $C_1, \sigma$ and $\delta_1$ depending on $M_0$ and  $n$ such that it always holds
\[\|A\|^2-\frac{H^2}{n}\leq C_1(H^2+1)^{1-\sigma}e^{-\delta_1 t},\]
where $\sigma\in(0,1),t\in[0,T).$
\end{thm}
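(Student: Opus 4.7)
The plan is to define an auxiliary function of the form
\[
f_{\sigma,\delta_1}=\frac{\|A\|^2-H^2/n}{(H^2+1)^{1-\sigma}}\,e^{\delta_1 t}
\]
for $\sigma,\delta_1>0$ to be chosen small, and to prove that $f_{\sigma,\delta_1}$ remains bounded by a constant depending only on its initial value (which is finite by compactness of $M_0$) via the maximum principle. The statement then rearranges to the claimed estimate.

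First I would compute the evolution equation for $\|A\|^2-H^2/n$ from Lemma \ref{lm 2.5}: combining part $(4)$ with $\partial_t H^2 = \Delta H^2-2\|\nabla H\|^2+2\|A\|^2H^2+2uH$ yields
\[
\tfrac{\partial}{\partial t}\bigl(\|A\|^2-\tfrac{H^2}{n}\bigr)
=\Delta\bigl(\|A\|^2-\tfrac{H^2}{n}\bigr)
-2\bigl(\|\nabla A\|^2-\tfrac{1}{n}\|\nabla H\|^2\bigr)
+2\|A\|^2\bigl(\|A\|^2-\tfrac{H^2}{n}\bigr)+v-\tfrac{2uH}{n}.
\]
The reaction piece $v-2uH/n$ should be estimated as in (\ref{2.22})-(\ref{2.24}): the leading ambient terms almost cancel by the Codazzi-type identity used in the sphere case, leaving an expression that is controlled by $\bar C\varepsilon_0 e^{-\lambda t}(\|A\|^2+\|A\|)$ thanks to Theorem \ref{thm 2.1} and (\ref{2.19})-(\ref{2.21}). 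In particular this reaction term contributes only an exponentially decaying error to the evolution of $f_{\sigma,\delta_1}$.

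Next I would differentiate $f_{\sigma,\delta_1}$. The computation produces, besides $\delta_1 f_{\sigma,\delta_1}$, the Laplacian of $f_{\sigma,\delta_1}$, a cross-gradient term $-\tfrac{2(1-\sigma)H}{H^2+1}\langle\nabla H,\nabla f_{\sigma,\delta_1}\rangle$ (a "drift" term absorbable into $\Delta f$ at an interior maximum), and a reaction term of shape
\[
\Bigl[-2\bigl(\|\nabla A\|^2-\tfrac{1}{n}\|\nabla H\|^2\bigr)(H^2+1)^{\sigma-1}
-2(1-\sigma)\tfrac{\|A\|^2-H^2/n}{(H^2+1)^{2-\sigma}}\|\nabla H\|^2
+2\sigma\|A\|^2 f_{\sigma,\delta_1}\bigr]+O(\varepsilon_0 e^{-\lambda t}).
\]
Here the crucial cancellation is the Kato-type inequality of Lemma \ref{lm 2.4}: choosing $\eta$ so that $\tfrac{3}{n+2}-\eta>\tfrac{1}{n}$, the "good" gradient term $\|\nabla A\|^2-\tfrac{1}{n}\|\nabla H\|^2$ is positive up to an $O(\|S\|^2)=O(\varepsilon_0^2 e^{-2\lambda t})$ error, so it can be thrown away in favor of a negative multiple of $f_{\sigma,\delta_1}\|\nabla H\|^2/(H^2+1)$.

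At an interior maximum of $f_{\sigma,\delta_1}$ the gradient term vanishes and one is left with
\[
\bigl(\delta_1+2\sigma\|A\|^2\bigr)f_{\sigma,\delta_1}\;\le\;C\varepsilon_0 e^{-\lambda t}\bigl(1+f_{\sigma,\delta_1}\bigr).
\]
Using Lemma \ref{lm 2.6}, $\|A\|^2\le\alpha_n H^2+1$ so $2\sigma\|A\|^2\le 2\sigma\alpha_n(H^2+1)$; this has to be absorbed by the negative contribution of the gradient terms, which after the Kato cancellation produces a term comparable to $-c H^2 f_{\sigma,\delta_1}/(H^2+1)$. The main obstacle, and the heart of the proof, is choosing $\sigma$ so small that this negative term dominates the positive reaction $2\sigma\|A\|^2 f_{\sigma,\delta_1}$: this requires the strict pinching $\alpha_n<\tfrac{1}{n-1}$ (exactly the role of the specific values of $\alpha_n$ in (\ref{1.4})). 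Once this is arranged one picks $\delta_1$ even smaller so that the exponentially decaying ambient-error term $C\varepsilon_0 e^{-\lambda t}$ can be absorbed, and the maximum principle gives $f_{\sigma,\delta_1}\le C_1$ for all time. Finally, if $H$ vanishes somewhere a short separate argument using the $+1$ regularization shows the same bound persists without division by zero, so the conclusion holds globally.
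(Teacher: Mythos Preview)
Your maximum-principle approach cannot close, and the paper explicitly says why right after its inequality (\ref{3.12}): ``We can't get the desired estimate by using the maximum principle directly due to the appearance of $2\sigma\|A\|^2 f_\sigma$ on the right hand of (\ref{3.12}).'' The gap in your argument is the sentence claiming that ``the negative contribution of the gradient terms \ldots\ produces a term comparable to $-cH^2 f_{\sigma,\delta_1}/(H^2+1)$.'' It does not. The good gradient terms are of the form $-c\,W^{\sigma-1}\|\nabla H\|^2$ (and $-c\,f\,\|\nabla H\|^2/W$), while the bad reaction term is $+2\sigma\|A\|^2 f_\sigma$. There is no pointwise inequality relating $\|\nabla H\|^2$ to $H^2$ or $\|A\|^2$; at a spatial maximum of $f_\sigma$ one has $\nabla f_\sigma=0$ but $\|\nabla H\|^2$ can perfectly well be zero while $\|A\|^2$ is enormous, so no choice of $\sigma>0$ makes the gradient term dominate $2\sigma\|A\|^2 f_\sigma$. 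The pinching $\alpha_n<\frac{1}{n-1}$ is not what rescues this step.

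What the paper actually does is abandon the maximum principle at this point and run an integral argument. Using Simons' identity together with the sectional-curvature lower bound of Lemma~\ref{lm 3.2}, one gets a \emph{lower} bound for $\Delta f_\sigma$ containing the good algebraic term $\frac{1}{4n}W f_\sigma$; multiplying by $f_\sigma^{p-1}$ and integrating by parts yields Lemma~3.2, which controls $\int W f_\sigma^p$ by the gradient integrals. This is then combined with the $L^p$ version of the evolution inequality to show $\|f_\sigma\|_{L^p}\le C^* e^{-\delta t}$ for $p$ large and $\sigma\sim p^{-1/2}$ (Lemma~3.3), and finally a De~Giorgi--Moser iteration upgrades this to the $L^\infty$ bound. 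The exponential factor comes from the genuine $-\tfrac{1}{2}f_\sigma$ term produced by the positive ambient curvature (your proposal misses this source of decay as well). If you want to salvage your write-up, the evolution computation you sketch is essentially correct through (\ref{3.12}); what is missing is the entire Simons-identity/$L^p$/iteration machinery that replaces the failed maximum-principle step.
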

\begin{proof}
For convenience, let
\[W=aH^2+1,\qquad  f_{\sigma}=\frac{\|A\|^2-\frac{H^2}{n}}{W^{1-\sigma}},\]
where $a=\alpha_n-\frac{1}{n}$.
We use $C$ to denote the constant only depending on $n$ which may vary from line to line.
By Lemma \ref{lm 2.6}, $f_0\leq 1$.
From Lemma \ref{lm 2.5}, we can  get the evolution equation of $f_0$,
\begin{align*}
\frac{\partial}{\partial t}f_0=&\frac{1}{W}
\frac{\partial}{\partial t}(\|A\|^2-\frac{H^2}{n})+(\|A\|^2-\frac{H^2}{n})
\frac{\partial}{\partial t}(\frac{1}{W})\\
=&\frac{1}{W}\{\Delta(\|A\|^2-\frac{H^2}{n})+(\frac{2}{n}\|\nabla H\|^2-2\|\nabla A\|^2)
+2(\|A\|^4-\frac{H^2}{n}\|A\|^2)+v-\frac{2uH}{n}\}\\
&-(\|A\|^2-\frac{H^2}{n})\frac{2a H}{W^2}(\Delta H+\|A\|^2H+u)
\end{align*}
Using
\begin{equation}\label{3.1}
\Delta f_0=\frac{1}{W}\{\Delta(\|A\|^2-\frac{H^2}{n})-f_0\Delta(aH^2)\}-\frac{4aH}{W}\nabla_iH\nabla_if_0,
\end{equation}
we find
\begin{align*}
\frac{\partial}{\partial t}f_0=&\Delta f_0+\frac{4a H}{W}\nabla_i H\nabla_i f_0+
\frac{2}{W}(a f_0\|\nabla H\|^2+\frac{\|\nabla H\|^2}{n}-\|\nabla A\|^2)\\
&+2\|A\|^2f_0+\frac{1}{W}(v-\frac{2}{n}uH)-\frac{2a Hf_0}{W}(\|A\|^2H+u)\tag{3.2}\label{3.2}
\end{align*}
By taking $\eta$ small enough in Lemma \ref{lm 2.4}, we have
\begin{equation}\tag{3.3}\label{3.3}
(af_0+\frac{1}{n})\|\nabla H\|^2-\|\nabla A\|^2\leq\alpha_n\|\nabla H\|^2-\|\nabla A\|^2\leq
-\frac{\|\nabla H\|^2}{2^4n}+Ce^{-\lambda t}
\end{equation}
Using (\ref{2.20}), (\ref{2.21}) and (\ref{2.23}), we get
\begin{align*}
v\leq&\frac{-4\bar{r}}{n+1}(\|A\|^2-\frac{H^2}{n})
+\frac{2\bar{r}\|A\|^2}{n+1}+Ce^{-\lambda t}(\|A\|^2+1)\\
\leq&\frac{4\bar{r}H^2}{n(n+1)}-\frac{2\bar{r}\|A\|^2}{n+1}
+Ce^{-\lambda t}(\|A\|^2+1)
\end{align*}
and
\begin{equation}\tag{3.4}\label{3.4}
\frac{\bar{r}H^2}{n+1}-Ce^{-\lambda t}(\|A\|^2+1)\leq uH
\leq\frac{\bar{r}H^2}{n+1}+Ce^{-\lambda t}(\|A\|^2+1)
\end{equation}
Now it follows
\begin{equation}\tag{3.5}\label{3.5}
v-\frac{2}{n}Hu
\leq -\frac{2\bar{r}}{n+1}(\|A\|^2-\frac{H^2}{n})+Ce^{-\lambda t}(\|A\|^2+1),
\end{equation}
so we can derive
\begin{align*}
&2\|A\|^2f_0+\frac{1}{W}(v-\frac{2}{n}uH)-\frac{2a Hf_0}{W}(\|A\|^2H+u)\\
\leq& 2f_0\|A\|^2+\frac{1}{W}\{\frac{-2\bar{r}}{n+1}(\|A\|^2-\frac{H^2}{n})
+C(\|A\|^2+1)e^{-\lambda t}\}\\
&-\frac{2af_0}{W}
\{\|A\|^2H^2+\frac{\bar{r}H^2}{n+1}-Ce^{-\lambda t}(\|A\|^2+1)\}\\
\leq&2f_0\{\|A\|^2-\frac{2\bar{r}}{n+1}-\frac{a\|A\|^2H^2}{W}-\frac{a\bar{r}H^2}{(n+1)W}\}
+Ce^{-\lambda t}\\
\leq&\frac{2f_0}{W}\{a\|A\|^2H^2+\|A\|^2-\frac{\bar{r}}{n+1}(aH^2+1)-aH^2\|A\|^2-\frac{a\bar{r}H^2}{n+1}\}
+Ce^{-\lambda t}\\
\leq&\frac{2f_0}{W}\{\alpha_n H^2+1-n(1-\varepsilon_0)(aH^2+1)-an(1-\varepsilon_0)H^2\}+Ce^{-\lambda t}\\
\leq&\frac{2f_0}{W}\{[(\alpha_n-2an(1-\varepsilon_0)]H^2-n(1-a)(1-\varepsilon_0)\}+Ce^{-\lambda t}\\
\leq&-\frac{f_0}{2}+Ce^{-\lambda t}\tag{3.6}\label{3.6}
\end{align*}
where we have used Lemma \ref{lm 2.6} and (\ref{2.16}).
Substiting (\ref{3.3}) and (\ref{3.6}) into (\ref{3.2}), we have
\begin{equation}\tag{3.7}\label{3.7}
\frac{\partial }{\partial t}f_0\leq\Delta f_0+\frac{4aH}{W}
\nabla_iH\nabla_if_0-\frac{\|\nabla H\|^2}{8nW}-\frac{1}{2}f_0+Ce^{-\lambda t}
\end{equation}
Similarly, we have
\begin{align*}
\frac{\partial}{\partial t}W^{\sigma}=&\Delta W^{\sigma}-4\sigma(\sigma-1)a^2 H^2W^{\sigma-2}\|\nabla H\|^2
-2a\sigma W^{\sigma-1}\|\nabla H\|^2\nonumber\\
&+2a\sigma H^2W^{\sigma-1}\|A\|^2+2a\sigma uHW^{\sigma-1}\tag{3.8}\label{3.8}
\end{align*}
Combining (\ref{3.7}) and (\ref{3.8}) gives
\begin{align*}
\frac{\partial}{\partial t}f_{\sigma}=&\frac{\partial}{\partial t}(f_0W^{\sigma})\leq
\Delta f_{\sigma}-2\nabla_if_0\nabla_iW^{\sigma}+ 4aHW^{\sigma-1}
\nabla_if_{0}\nabla_i H\\
&-\frac{1}{8n}W^{\sigma-1}\|\nabla H\|^2-\frac{1}{2} f_{\sigma}+4a^2\sigma(1-\sigma)H^2W^{\sigma-2}f_0\|\nabla H\|^2\\
&
-2a\sigma W^{\sigma-1}f_0\|\nabla H\|^2+2a\sigma H^2W^{\sigma-1}f_0(\|A\|^2+n)
+Ce^{-\lambda t}W^{\sigma}\tag{3.9}\label{3.9}
\end{align*}
Using
\begin{align}
\nabla_if_0\nabla_iW^{\sigma}&=2a\sigma HW^{\sigma-1}\nabla_if_0\nabla_iH\nonumber\\
\nabla_if_{\sigma}\nabla_iH&=W^{\sigma}\nabla_if_0\nabla_iH+2a\sigma W^{\sigma-1}Hf_0\|\nabla H\|^2\tag{3.10}\label{3.10}
\end{align}
we find
\begin{align*}
&-2\nabla_if_0\nabla_iW^{\sigma}+4aHW^{\sigma-1}
\nabla_if_0\nabla_i H\\
=&4a(1-\sigma)HW^{-1}\nabla_if_{\sigma}\nabla_iH-8a^2\sigma(1-\sigma)W^{\sigma-2}H^2f_0\|\nabla H\|^2\tag{3.11}\label{3.11}
\end{align*}
Substitute (\ref{3.11}) into (\ref{3.9}), we obtain
\begin{align*}
\frac{\partial}{\partial t}f_{\sigma}\leq&\Delta f_{\sigma}-4a(1-\sigma)W^{-1}H\nabla_if_{\sigma}\nabla_iH
-\frac{1}{8n}W^{\sigma-1}\|\nabla H\|^2\\
&-\frac{1}{2} f_{\sigma}+2\sigma f_{\sigma}(\|A\|^2+n)
+Ce^{-\lambda t}W^{\sigma}\tag{3.12}\label{3.12}
\end{align*}

  We can't get the desired estimate  by using the maximum principle directly due to the appearance of
$2\sigma\|A\|^2f_{\sigma}$ on the right hand of (\ref{3.12}). To proceed further, we may employ the
De Giorgi-Moser iteration, see a similar argument in \cite{Hu3}. First, we will show the sectional
curvature of $M_t$ is positive. For any point $x\in M_t$, we choose
an orthonormal basis $\{e_1,\cdot\cdot\cdot, e_n\}$ for $T_xM_{t}$, such that $h_{ij}=\kappa_i\delta_{ij}$.
\begin{lm}\label{lm 3.2}
Let $K_x(e_i,e_j)$ denote the  sectional curvature of $2$-plane $span \{e_i, e_j\}\subset T_x(M_t)$. Then
\[K_x(e_i,e_j)\geq\frac{H^2+1}{8n^2}\]
as long as (\ref{1.4}) holds.
\end{lm}
\begin{proof}[Proof of Lemma 3.1]
For any $i\neq j$,
\[\|A\|^2-\frac{H^2}{n-1}\geq-2\kappa_i\kappa_j\]
By Gauss equation,
\begin{align*}
K_x(e_i,e_j)&=\frac{1}{2}(2\bar{R}_{ijij}+2\kappa_i\kappa_j)\\
&\geq\frac{1}{2}(2-4\varepsilon_1+\frac{H^2}{n-1}-\|A\|^2)\\
&\geq\frac{1}{2}(2-4\varepsilon_1+\frac{H^2}{n-1}-\alpha_nH^2-1)\\
&\geq \frac{H^2+1}{8n^2}
\end{align*}
Since $i\neq j$ is arbitrary, we get the desired estimate.
\end{proof}

Recall Simon's identity \cite{Si},
\begin{align*}
\Delta\|A\|^2=&2h_{ij}\nabla_i\nabla_jH+2\|\nabla A\|^2+2Z+2Hh_{ij}\bar{R}_{0i0j}-2\bar{R}_{00}\|A\|^2\\
&+4\bar{R}_{kikp}h_{pj}h_{ij}-4\bar{R}_{kipj}h_{kp}h_{ij}
+2\bar{\nabla}_k\bar{R}_{0ijk}h_{ij}+2\bar{\nabla}_i\bar{R}_{0j}h_{ij},
\end{align*}
where $Z=H tr(A^3)-\|A\|^4$.
By a direct computation,
\begin{align*}
&2Z+2Hh_{ij}\bar{R}_{0i0j}-2\bar{R}_{00}\|A\|^2
+4\bar{R}_{kikp}h_{pj}h_{ij}-4\bar{R}_{kipj}h_{kp}h_{ij}
+2\bar{\nabla}_k\bar{R}_{0ijk}h_{ij}+2\bar{\nabla}_i\bar{R}_{0j}h_{ij}\\
\geq&2(\sum_{i=1}^n\kappa_i)(\sum_{i=1}^n\kappa_i^3)-2(\sum_{i=1}^n\kappa_i^2)^2
+\frac{2\bar{r}(n\|A\|^2-H^2)}{n(n+1)}-Ce^{-\lambda t}(\|A\|^2+1)\\
\geq&2\sum_{i<j}\kappa_i\kappa_j(\kappa_i-\kappa_j)^2+\frac{2\bar{r}}{n(n+1)}\sum_{i<j}(\kappa_i-\kappa_j)^2
-Ce^{-\lambda t}(\|A\|^2+1)\\
\geq&2\sum_{i<j}K_x(e_i,e_j)(\kappa_i-\kappa_j)^2-Ce^{-\lambda t}(\|A\|^2+1)\\
\geq&\frac{W}{4n}(\|A\|^2-\frac{H^2}{n})-Ce^{-\lambda t}(\|A\|^2+1)
\end{align*}
Now we have
\begin{equation}\tag{3.13}\label{3.13}
\Delta\|A\|^2\geq2h_{ij}\nabla_i\nabla_jH+2\|\nabla A\|^2+
\frac{W}{4n}(\|A\|^2-\frac{H^2}{n})-Ce^{-\lambda t}(\|A\|^2+1)
\end{equation}
Substituting the inequality above into (\ref{3.1}) gives
\begin{align*}
\Delta f_0\geq&W^{-1}\{2h_{ij}\nabla_i\nabla_jH+2\|\nabla A\|^2+
\frac{W}{4n}(\|A\|^2-\frac{H^2}{n})-Ce^{-\lambda t}(\|A\|^2+1)\\
&-\frac{2}{n}H\Delta H-\frac{2}{n}\|\nabla H\|^2-2a f_0H\Delta H-2a f_0\|\nabla H\|^2\}
-4a HW^{-1}\nabla_iH\nabla_if_0
\end{align*}
We denote by $h^0_{ij}=h_{ij}-\frac{1}{n}Hg_{ij}$ the tracefree second fundamental form.
Notice
\[\frac{2}{n}\|\nabla H\|^2+2a f_0\|\nabla H\|^2\leq2\alpha_n\|\nabla H\|^2\leq2\|\nabla A\|^2+Ce^{-\lambda t}\]
Then we derive
\begin{align*}
\Delta f_0\geq &W^{-1}\{2h^0_{ij}\nabla_i\nabla_jH+
\frac{W}{4n}(\|A\|^2-\frac{H^2}{n})-Ce^{-\lambda t}(\|A\|^2+1)\\
&-2a f_0H\Delta H
-4aH\nabla_iH\nabla_if_0\}\tag{3.14}\label{3.14}
\end{align*}
Multiplying two sides of (\ref{3.14}) by $W^{\sigma}$ yields
\begin{align*}
\Delta f_{\sigma}=&W^{\sigma}\Delta f_0+f_0\Delta W_{\sigma}+2\nabla_if_0\nabla_iW^{\sigma}\\
\geq&W^{\sigma-1}\{2h^0_{ij}\nabla_i\nabla_jH+\frac{1}{4n}W^{2-\sigma}f_{\sigma}-2af_0H\Delta H-4aH\nabla_iH\nabla_if_0\}\\
&+f_0\{\sigma W^{\sigma-1}(2aH\Delta H+2a\|\nabla H\|^2)+4a^2\sigma (\sigma-1)H^2W^{\sigma-2}\|\nabla H\|^2\}\\
&+2\nabla_if_0\nabla_iW^{\sigma}-Ce^{-\lambda t}(\|A\|^2+1)W^{\sigma-1}
\end{align*}
From (\ref{3.11}),
we have
\begin{align*}
&-4aHW^{\sigma-1}\nabla_iH\nabla_if_0+4a^2\sigma(\sigma-1)f_0W^{\sigma-1}\|\nabla H\|^2
+2\nabla_if_0\nabla_iW^{\sigma}\\
=&-4a(1-\sigma)HW^{-1}\nabla_iH\nabla_if_{\sigma}+4a^2\sigma(1-\sigma)f_0H^2W^{\sigma-2}\|\nabla H\|^2
\end{align*}
Since $\sigma<1$, we get
\begin{align*}
\Delta f_{\sigma}\geq&2W^{\sigma-1}h^0_{ij}\nabla_i\nabla_jH+\frac{W}{4n}f_{\sigma}
-2a(1-\sigma)HW^{-1}f_{\sigma}\Delta H\\
&-4a(1-\sigma)HW^{-1}\nabla_iH\nabla_if_{\sigma}
-Ce^{-\lambda t}(\|A\|^2+1)W^{\sigma-1}
\end{align*}
By multiplying this inequality by $f_{\sigma}^{p-1}$ and integrating on $M_t$, it follows
\begin{align}
\frac{1}{4n}\int W f_{\sigma}^pd\mu\leq&-(p-1)\int f_{\sigma}^{p-2}\|\nabla f_{\sigma}\|^2d\mu
-2\int W^{\sigma-1}h^0_{ij}\nabla_i\nabla_jH f_{\sigma}^{p-1}d\mu\nonumber\\
&+2a(1-\sigma)\int HW^{-1}f_{\sigma}^p\Delta Hd\mu
+4a(1-\sigma)\int HW^{-1}f_{\sigma}^{p-1}\nabla_iH\nabla_if_{\sigma}d\mu\nonumber\\
&+Ce^{-\lambda t}\int (\|A\|^2+1)W^{\sigma-1}f_{\sigma}^{p-1}d\mu\tag{3.15}\label{3.15}
\end{align}
By Codazzi equation,
\[\nabla_ih_{ij}^0=\frac{n-1}{n}\nabla_jH+\bar{R}_{0j}\]
Using Stokes' theorem, then
\begin{align*}
&2\int W^{\sigma-1}h^0_{ij}\nabla_i\nabla_jH f_{\sigma}^{p-1}d\mu\\
\geq&-2\int W^{\sigma-1}\{(p-1)\|\nabla H\|\|h_{ij}^0\|f_{\sigma}^{p-2}
+\frac{n-1}{n}\|\nabla H\|^2f_{\sigma}^{p-1}+e^{-\lambda t}\|\nabla H\|f_{\sigma}^{p-1}\}d\mu\nonumber\\
&-\int 4a(1-\sigma)W^{\sigma-2}\|h^0_{ij}\|\|H\|\|\nabla H\|^2f_{\sigma}^{p-1}d\mu\tag{3.16}\label{3.16}
\end{align*}
and
\begin{align*}
&2a(1-\sigma)\int HW^{-1}f_{\sigma}^p\Delta Hd\mu\nonumber\\
\geq & -2a\int \|\nabla H\|^2W^{-1}f_{\sigma}^p\nonumber\\
&-2a\int(2a H^2W^{-2}f_{\sigma}^p\|\nabla H\|^2+p\|H\|W^{-1}f_{\sigma}^{p-1}
\|\nabla H\|\|\nabla f_{\sigma}\|)d\mu\tag{3.17}\label{3.17}
\end{align*}
Combining (\ref{3.15}), (\ref{3.16}) and (\ref{3.17}),  we obtain
\begin{align*}
&\frac{1}{4n}\int W f_{\sigma}^pd\mu\\
\leq &-(p-1)\int f_{\sigma}^{p-2}\|\nabla f_{\sigma}\|^2d\mu+
2(p-1)\int W^{\sigma-1}\|\nabla H\|\|h_{ij}^0\|\|\nabla f_{\sigma}\|f_{\sigma}^{p-2}d\mu\\
&+\int \|\nabla H\|^2(W^{\sigma-1}f_{\sigma}^{p-1}+2aW^{-1}f_{\sigma}^p)d\mu
+4\int W^{\sigma-1}e^{-\lambda t}\|\nabla H\|f_{\sigma}^{p-1}d\mu\\
&+\int 4a(1-\sigma)W^{\sigma-2}\|h^0_{ij}\|\|H\|\|\nabla H\|^2f_{\sigma}^{p-1}d\mu
+4a^2\int H^2W^{-2}f_{\sigma}^p\|\nabla H\|^2d\mu\\
&+2a(2+p)\int\|H\|W^{-1}f_{\sigma}^{p-1}
\|\nabla H\|\|\nabla f_{\sigma}\|d\mu+Ce^{-\lambda t}\int (\|A\|^2+1)W^{\sigma-1}f_{\sigma}^{p-1}d\mu
\end{align*}
Using
\[\|h_{ij}^0\|^2=\|A\|^2-\frac{H^2}{n}=f_{\sigma}W^{1-\sigma},\ \
 \|a H\|\leq W^{\frac{1}{2}},\ \ \ \ \ \ f_{\sigma}\leq W^{\sigma}
\]
and Cauchy-Schwartz inequality, we derive
\begin{lm}\label{lm 3.2}
Let $p\geq 2$. Then for any $\theta>0$ and any $\sigma\in[0,\frac{1}{4}]$, it holds
\begin{align*}
\frac{1}{4n}\int Wf_{\sigma}^pd\mu\leq&
(2\theta(p+1)+8)\int W^{\sigma-1}f_{\sigma}^{p-1}\|\nabla H\|^2d\mu\\
&+\frac{2p}{\theta}\int f_{\sigma}^{p-2}\|\nabla f_{\sigma}\|^2d\mu
+Ce^{-\lambda t}\int W^{\sigma}f_{\sigma}^{p-1}d\mu
\end{align*}
\end{lm}

Now we are ready to give an estimate for $L^p$-norm of $f_{\sigma}$, if $\sigma$ is of order $p^{-\frac{1}{2}}$.

\begin{lm}\label{lm 3.3}
For any $p\geq2^6n^2, \sigma\leq2^{-6}n^{-2}p^{-\frac{1}{2}}$,
there exist constants $C^*$ and $\delta>0$  depending only on $M_0$ and $n$,
 such that for any $t\in[0,T)$
we have the  estimate
\[(\int_{M_t} f_{\sigma}^pd\mu)^{\frac{1}{p}}\leq C^*e^{-\delta t}.\]
\end{lm}
\textbf{Proof of Lemma 3.3}
From (\ref{3.12}), it's easy to show
\begin{align*}
&\frac{\partial}{\partial t}\int f_{\sigma}^pd\mu+p(p-1)\int f_{\sigma}^{p-2}\|\nabla f_{\sigma}\|^2d\mu
+\frac{p}{8}\int W^{\sigma-1}f_{\sigma}^{p-1}\|\nabla H\|^2d\mu\\
\leq&4ap \int\|H\|W^{-1}\|\nabla H\|\|\nabla f_{\sigma}\|f_{\sigma}^{p-1}d\mu
+2\sigma p\int(\|A\|^2+n)f_{\sigma}^pd\mu-\frac{p}{2} \int f_{\sigma}^pd\mu\\
&+Cpe^{-\lambda t}\int W^{\sigma}f_{\sigma}^{p-1}d\mu
+\int\|g^{ij}(\bar{R}_{ij}-\frac{\bar{r}\bar{g}_{ij}}{n+1})\|f_{\sigma}^pd\mu
-\int H^2f_{\sigma}^pd\mu\\
\leq&4ap \int\|H\|W^{-1}\|\nabla H\|\|\nabla f_{\sigma}\|f_{\sigma}^{p-1}f_{\sigma}^pd\mu
+(8n\sigma p+1)\int Wf_{\sigma}^pd\mu-\frac{p}{2} \int f_{\sigma}^pd\mu\\
&+Cpe^{-\lambda t}\int W^{\sigma}f_{\sigma}^{p-1}d\mu-\int Wf_{\sigma}^pd\mu,
\end{align*}
where we have used (\ref{2.19}) and the fact that
\[\|A\|^2+n\leq \alpha_n H^2+1+n\leq 4nW,
\ \ f_{\sigma}\leq W^{\sigma}\]
Set $\theta=\frac{1}{16\sqrt{p}}$ in Lemma 3.4, then by our choice of $p$ and $\sigma$,
\begin{align*}
\frac{\partial}{\partial t}\int f_{\sigma}^pd\mu\leq
-\frac{p}{2} \int f_{\sigma}^pd\mu
+Cpe^{-\lambda t}\int W^{\sigma}f_{\sigma}^{p-1}d\mu-\int Wf_{\sigma}^pd\mu\tag{3.18}\label{3.18}
\end{align*}
Since
\begin{align*}
\frac{d}{dt}\int_{M_t}d\mu=&\int_{M_t}(-H^2-g^{ij}\bar{R}_{ij}+\frac{n\bar{r}}{n+1})d\mu\\
\leq&\int_{M_t}\|g^{ij}(\bar{R}_{ij}-\frac{\bar{r}\bar{g}_{ij}}{n+1})\|d\mu
\leq ne^{-\lambda t}\int_{M_t}d\mu
\end{align*}
then
\[\int_{M_t}d\mu\leq e^{\frac{n}{\lambda}}\int_{M_0}d\mu.\]
Let
\[\Lambda=1+e^{\frac{n}{\lambda}}\int_{M_0}d\mu,\] and
\[I=\{t\in[0,T)\big\|\int_{M_t}f_{\sigma}^p> \Lambda e^{-\frac{p\lambda t}{2}}\}.\]
If $I=\emptyset$, then the Lemma follows automatically. Otherwise, let $t_0=\inf I$,
then at $t=t_0$, we have
\[\int_{M_{t_0}}f_{\sigma}^p=\Lambda e^{-\frac{p\lambda t_0}{2}}.\]
For any $t_1\in(t_0,T)$, we only need to consider the case
\[\int_{M_t} f_{\sigma}^p\geq \Lambda e^{-\frac{p\lambda t}{2}}, \ \ \  \forall t \in[t_0,t_1] \]
Let $s\in(1,\frac{3}{2})$  satisfy $\sigma+\frac{1}{s}=1$. Then by H\"{o}lder inequality and
Young's inequality,
\begin{align*}
\int_{M_t} W^{\sigma}f_{\sigma}^{p-1}&=\int_{M_t} W^{\sigma}f_{\sigma}^{p\sigma}f_{\sigma}^{p-p\sigma-1}\\
&\leq(\int_{M_t} Wf_{\sigma}^p)^{\sigma}
(\int_{M_t} f_{\sigma}^{(p-p\sigma-1)s})^{\frac{1}{s}}\\
&\leq \tau\int_{M_t} Wf_{\sigma}^p+(\frac{1}{\tau})^{s\sigma}\int_{M_t} f_{\sigma}^{p-s}\\
&\leq\tau\int_{M_t} Wf_{\sigma}^p+\Lambda(\frac{1}{\tau})^{s\sigma}
(\int_{M_t} f_{\sigma}^p)^{1-\frac{s}{p}}\tag{3.19}\label{3.19}
\end{align*}
By choosing $\tau=\frac{1}{Cp}$ and substituting (\ref{3.19}) into (\ref{3.18}),
\[
\frac{\partial}{\partial t}\int_{M_t} f_{\sigma}^p\leq-\frac{p}{2}\int_{M_t} f_{\sigma}^pd\mu
+2\Lambda Cpe^{-\lambda t}(\int_{M_t} f_{\sigma}^pd\mu)^{1-\frac{s}{p}}\]
Since $\int_{M_t}f_{\sigma}^p d\mu\geq \Lambda e^{-\frac{p\lambda t}{2}}$,  we have
\begin{align*}
\frac{\partial}{\partial t}\int_{M_t} f_{\sigma}^pd\mu\leq
-\frac{p}{2} \int_{M_t} f_{\sigma}^pd\mu
+2\Lambda Cpe^{-\frac{\lambda t}{4}}\int_{M_t} f_{\sigma}^p
\end{align*}
Integrating the inequality above from $t_0$ to $t_1$ yields
\[\int_{M_{t_1}}f_{\sigma}^p\leq e^{\frac{8\Lambda C}{\lambda}p}
\int_{M_{t_0}} f_{\sigma}^pe^{-\frac{p}{2} (t_1-t_0)}\leq
(C^*)^pe^{-p\delta t_1}\]
where $C^*=\Lambda e^{\frac{8\Lambda C}{\lambda}}, \delta=\min\{\frac{\lambda}{2}, \frac{1}{2}\}$.
Note the constant $C^*$ is independent of $t_1$, thus we complete the proof of  the Lemma.

As a consequence of Lemma 3.3, we have
\begin{coro}\label{coro 3.4}
For any $m\geq1$,  $p\geq m^22^7n^2$, and $ \sigma\leq2^{-7}n^{-2}p^{-\frac{1}{2}}$, we have
\[(\int_{M_t} W^mf_{\sigma}^p)^{\frac{1}{p}}\leq C^*e^{-\delta t}\]
\end{coro}
\quad To prove Theorem 3.1, it suffices to give an uniformly upper bound for $g_{\sigma}=f_{\sigma}e^{\frac{\delta t}{2}}$.
For any $m,p,\sigma$ satisfying the condition of Corollary $3.1$,
\begin{equation}\tag{3.20}\label{3.20}
(\int_{M_t} W^mg_{\sigma}^p)^{\frac{1}{p}}\leq e^{\frac{\delta t}{2}}(\int_{M_t} W^mf_{\sigma}^p)^{\frac{1}{p}}
\leq C^*e^{-\frac{\delta t}{2}}.
\end{equation}
Let
$g_{{\sigma},k}=\max(g_{\sigma}-k,0), \phi=g_{{\sigma},k}^{\frac{p}{2}},
A(k,t)=\{x\in M_t\|g_{\sigma}>k\}$,
and
\[\|\|A(k,t)\|\|_{T_1}=\int_{0}^{T_1}\|A(k,t)\|dt=\int_{0}^{T_1}\int _{A(k,t)}d\mu dt\]
By H\"{o}lder inequality,
\[\|A(k,t)\|\leq\frac{1}{k}\int_{M_t}g_{\sigma}d\mu\leq
\frac{\Lambda}{k}e^{\frac{\delta t}{2}}(\int_{M_t}f_{\sigma}^p)^{\frac{1}{p}}d\mu
\leq\frac{\Lambda C^*}{k}\]
From (\ref{3.20}), we derive
\begin{align*}
\int_{A(k,t)}\|H\|^nd\mu\leq (\frac{1}{a})^{\frac{n}{2}}k^{-p}\int_{M_t}W^{\frac{n}{2}}g_{\sigma}^pd\mu
\leq(2n)^n(\frac{C^*}{k})^p\tag{3.21}\label{3.21}
\end{align*}
Given $p\geq 2$, we can choose $k_1\geq2nCC^*$ large enough such that for any $k\geq k_1$
the following Sobolev inequality \cite{Ho} holds
\begin{equation}\notag
(\int_{A(k,t)}\phi^{\frac{n}{n-1}}d\mu)^{\frac{n-1}{n}}\leq c_n(\int_{A(k,t)}\|\nabla \phi\|d\mu+\int_{A(k,t)}\|H\|\phi d\mu)
\end{equation}
where $c_n$ is a constant only depending on $n$. By H\"{o}lder inequality,
\begin{align*}
(\int_{A(k,t)} \phi^{2q}d\mu)^{\frac{1}{q}}\leq& c_n\int_{A(k,t)} \|\nabla \phi\|^2d\mu
+c_n(\int_{A(k,t)} \|H\|^nd\mu)^{\frac{2}{n}}
(\int_{A(k,t)}\phi^{2q}d\mu)^{\frac{1}{q}} \tag{3.22}\label{3.22}
\end{align*}
where
\[
q=\left\{
\begin{array}{lll}
\frac{n}{n-2},\  n>2,\\
<\infty, n=2.
\end{array}
\right.
\]
Since $k_1$ is large enough,
\[c_n(\int_{A(k,t)} \|H\|^nd\mu)^{\frac{2}{n}}\leq\frac{1}{2}\]
By (\ref{3.12}), we have
\begin{equation}\tag{3.23}\label{3.23}
\frac{\partial}{\partial t}\int_{A(k,t)}\phi^2+\int_{A(k,t)}\|\nabla \phi\|^2
\leq Cp\int_{A(k,t)}Wg_{\sigma}^pd\mu
\end{equation}
Substituting (\ref{3.22}) into (\ref{3.23}) gives
\[\frac{\partial}{\partial t}\int_{A(k,t)}\phi^2d\mu+ \frac{1}{C}(\int_{A(k,t)}\phi^{2q}d\mu)^{\frac{1}{q}}
\leq Cp\int_{A(k,t)}Wg_{\sigma}^pd\mu\]
Then for any $T_1<T$,
\begin{equation}\tag{3.24}\label{3.24}
\sup_{[0,T_1]}\int_{M_t}\phi^2d\mu+\frac{1}{C}\int_0^{T_1}(\int_{M_t}\phi^{2q}d\mu)^{\frac{1}{q}}dt\leq Cp\int_0^{T_1}
\int_{A(k,t)}Wg_{\sigma}^pd\mu dt
\end{equation}
Using interpolation inequalities for $L^p$-space, we have
\begin{equation}\notag
(\int_{M_t}\phi^{2q_0}d\mu)^\frac{1}{q_0}\leq
(\int_{M_t}\phi^{2q}d\mu)^{\frac{\eta}{q}}(\int_{M_t}\phi^2d\mu)^{1-\eta}
\end{equation}
where $1<q_0<q$ and  $\eta=\frac{1}{q_0}=\frac{1}{2-\frac{1}{q}}$.

Thus
\begin{align*}
&(\int_0^{T_1}\int_{A(k,t)}\phi^{2q_0}d\mu dt)^{\frac{1}{q_0}}\\
\leq&[\int_0^{T_1}(\int_{A(k,t)}\phi^{2q}d\mu)^{\frac{\eta q_0}{q}}(\int_{A(k,t)}\phi^2d\mu)^{(1-\eta)q_0}dt]^{\frac{1}{q_0}}\\
\leq&(\sup_{[0,T_1]}\int_{M_t}\phi^2d\mu)^{1-\frac{1}{q_0}}(\int_0^{T_1}(\int_{M_t}\phi^{2q}d\mu)^{\frac{1}{q}}dt)^{\frac{1}{q_0}}\\
\leq&Cp\int_0^{T_1}\int_{A(k,t)}Wg_{\sigma}^pd\mu dt\\
\leq&Cp\|A(k,t)\|^{1-\frac{1}{\theta}}(\int_0^{T_1}\int_{M_t}W^{\theta}g_{\sigma}^{p\theta}d\mu dt)^{\frac{1}{\theta}}
\end{align*}
where $\theta>1$ is a positive constant to be chosen, we have used (\ref{3.24}) in the third inequality.
Applying H\"{o}lder inequality again, we have
\begin{align}
\int_0^{T_1}\int_{M_t}\phi^2d\mu dt\leq
&\|A(k,t)\|^{1-\frac{1}{q_0}}
(\int_0^{T_1}\int_{A(k,t)}\phi^{2q_0}d\mu dt)^{\frac{1}{q_0}}\nonumber\\
\leq&Cp\|A(k,t)\|^{2-\frac{1}{q_0}-\frac{1}{\theta}}(\int_0^{T_1}\int_{A(k,t)}W^{\theta}
g_{\sigma}^{p\theta}d\mu dt)^{\frac{1}{\theta}}\tag{3.25}.\label{3.25}
\end{align}
Now we choose $\theta$ large such that $\gamma=2-\frac{1}{q_0}-\frac{1}{\theta}>1$.
 Notice that $\theta$ is independent of the choice of $p$.  Choosing fixed
 $p_1\geq \theta 2^{8}n^2$ and $\sigma_1\leq2^{8}n^{-2}p_1^{-\frac{1}{2}}$,
by (\ref{3.20}), we have
\begin{align*}
&(\int_0^{T_1}\int_{A(k,t)} W^{\theta}g_{\sigma_1}^{p_1\theta}d\mu dt)^{\frac{1}{\theta}}\leq
(\int_0^{T_1}(Ce^{-\frac{\delta t}{2}})^{p_1\theta}dt)^{\frac{1}{\theta}}\leq C_{p_1}\tag{3.26}\label{3.26}
\end{align*}
Together (\ref{3.25}) with (\ref{3.26}), we obtain
\[\|h-k\|^{p_1}\|A(h,t)\|_{T_1}\leq\int_0^{T_1}\int_{M_t}\phi^2d\mu dt\leq C_{p_1}\|A(k)\|_{T_1}^{\gamma},\ \ \ \   h>k>k_1.\]
Thus by the De Giorgi's iteration Lemma, we conclude
\[   \|A(k,t)\|=0, \ \ \ \forall k\geq k_1+d,   \]
where $d$ is a constant depending on $M_0, n$ and  $\lambda $.
Hence
\[g_{\sigma}\leq k_1+d.\]
Notice both $k_1$ and $d$
are independent of $T_1$, so we finish the proof.
\end{proof}

\section{The gradient estimate}

In this section, we use Theorem 3.1 to get an estimate for the gradient of mean curvature.
\begin{thm}\label{thm 4.1}
For any $0<\beta\leq1$, there exists a constant $C_{\beta}$ depending only on $ \bar{g}_0, M_0, n$ and $\beta$,
such that for any point $(x,t)\in M_t\times[0,T)$ we have
\[\|\nabla H\|^2\leq(\beta\|H\|^4+C_{\beta})e^{-\frac{\delta_1}{2}t}\]
\end{thm}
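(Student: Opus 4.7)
The plan is to prove the estimate by a maximum-principle argument applied to the auxiliary function
\[
F := \|\nabla H\|^2 - \bigl(\beta H^4 + C_\beta\bigr)e^{-\delta_1 t/2},
\]
where $C_\beta$ is a large constant to be determined by $\beta$, $\bar{g}_0$, $M_0$ and $n$. Provided $C_\beta$ exceeds $\|\nabla H\|^2(\cdot,0)$, we have $F(\cdot,0)\leq 0$, and it suffices to show that at any interior space-time point where $F>0$ the evolution forces $F$ to decrease. The bound $\|\nabla H\|^2\leq(\beta H^4+C_\beta)e^{-\delta_1 t/2}$ will then follow by the parabolic maximum principle on the closed manifold $M_t$.

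First I would derive the evolution equation for $\|\nabla H\|^2$ from Lemma \ref{lm 2.5}(3) by differentiating $H$ and commuting $\Delta$ with $\nabla$ on $M_t$. This yields schematically
\[
\frac{\partial}{\partial t}\|\nabla H\|^2 = \Delta\|\nabla H\|^2 - 2\|\nabla^2 H\|^2 + 2\|A\|^2\|\nabla H\|^2 + 2H\langle\nabla\|A\|^2,\nabla H\rangle + E_1,
\]
where $E_1$ collects terms involving $\bar{R}m$, $\bar{\nabla}\bar{R}m$ and $\bar{\nabla}^2\bar{R}m$; by Theorem \ref{thm 2.1} and Remark \ref{rem 2.2} these are all dominated by $Ce^{-\lambda t}(\|A\|^3+1)(\|\nabla H\|+1)$. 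Similarly,
\[
\frac{\partial}{\partial t}H^4 = \Delta H^4 - 12 H^2\|\nabla H\|^2 + 4\|A\|^2 H^4 + 4u H^3,
\]
and using $\partial_t e^{-\delta_1 t/2}=-(\delta_1/2)e^{-\delta_1 t/2}$ one assembles an evolution inequality for $F$.

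At any interior maximum of $F$ with $F>0$, I would apply the pinching estimate of Theorem \ref{thm3.1}, which gives $\|A\|^2 = H^2/n + O\!\bigl(H^{2(1-\sigma)}e^{-\delta_1 t}\bigr)$. This replaces the leading bad/good pair $2\|A\|^2\|\nabla H\|^2 - 4\beta e^{-\delta_1 t/2}\|A\|^2 H^4$ by $(2/n)H^2\|\nabla H\|^2 - (4\beta/n)e^{-\delta_1 t/2}H^6$ up to exponentially small error. At the maximum one has $\|\nabla H\|^2\leq(\beta H^4+C_\beta)e^{-\delta_1 t/2}$, so the first term is bounded by $(2\beta/n)e^{-\delta_1 t/2}H^6 + (2C_\beta/n)e^{-\delta_1 t/2}H^2$, which is a factor $2$ smaller than the good term $(4\beta/n)e^{-\delta_1 t/2}H^6$ in the leading order. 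The gradient cross-term $2H\langle\nabla\|A\|^2,\nabla H\rangle$ expands by pinching into $(4/n)H^2\|\nabla H\|^2$ plus a small remainder, which together with the $12\beta H^2\|\nabla H\|^2 e^{-\delta_1 t/2}$ arising from $\Delta H^4$ concentrates all the difficulty at the quadratic-gradient level.

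The main obstacle is precisely absorbing these $O(H^2\|\nabla H\|^2)$ terms, since at the maximum they behave like $O(\beta H^6 e^{-\delta_1 t/2})$, of the same order as the good $-(4\beta/n)H^6 e^{-\delta_1 t/2}$. To overcome this I would exploit the negative term $-2\|\nabla^2 H\|^2$ through the interpolation $2|H|\|\nabla H\|\|\nabla^2 H\|\leq \varepsilon\|\nabla^2 H\|^2+\varepsilon^{-1}H^2\|\nabla H\|^2$ together with the Kato-type inequality of Lemma \ref{lm 2.4}, which gives $\|\nabla A\|^2\geq(3/(n+2)-\eta)\|\nabla H\|^2-C\|S\|^2$ and lets one convert $\|\nabla H\|^2$-terms into $\|\nabla A\|^2$-terms. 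If this is not quite sufficient on its own, adding a small correction of the form $N(\|A\|^2-H^2/n)(H^2+1)$ to $F$, whose coefficient is controlled by Theorem \ref{thm3.1}, produces an extra good term from $\Delta(\|A\|^2-H^2/n)$ (as in the proof of Theorem \ref{thm3.1}) that kills the residual $H^2\|\nabla H\|^2$ contribution. Choosing $C_\beta$ large enough depending on $\beta,\bar{g}_0, M_0,n$ then forces $\partial_t F\leq \Delta F + (\text{bounded})F$ at the positive maximum, completing the maximum-principle argument.
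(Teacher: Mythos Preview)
Your primary mechanism has a genuine gap: the coefficient bookkeeping does not close. After pinching, the evolution of $\|\nabla H\|^2$ contributes a bad term of size roughly $\tfrac{6}{n}H^2\|\nabla H\|^2$ (namely $2\|A\|^2\|\nabla H\|^2\approx\tfrac{2}{n}H^2\|\nabla H\|^2$ plus the leading part $\tfrac{4}{n}H^2\|\nabla H\|^2$ of $2H\langle\nabla\|A\|^2,\nabla H\rangle$), while the only good sixth-order term you produce is $-\tfrac{4\beta}{n}H^6e^{-\delta_1 t/2}$ from $-4\beta e^{-\delta_1 t/2}\|A\|^2H^4$. At the first contact point $F=0$ (not ``$F>0$ with $\|\nabla H\|^2\le\cdots$'', which is inconsistent) you get $\tfrac{6\beta}{n}H^6e^{-\delta_1 t/2}$ against $\tfrac{4\beta}{n}H^6e^{-\delta_1 t/2}$, and the bad side wins by $\tfrac{2\beta}{n}H^6e^{-\delta_1 t/2}$, which no choice of $C_\beta$ can absorb. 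Your proposed fix via $-2\|\nabla^2H\|^2$ does not help: the interpolation $2|H|\,\|\nabla H\|\,\|\nabla^2H\|\le\varepsilon\|\nabla^2H\|^2+\varepsilon^{-1}H^2\|\nabla H\|^2$ only \emph{creates} additional $H^2\|\nabla H\|^2$, it does not destroy it; and Lemma~\ref{lm 2.4} compares $\|\nabla A\|^2$ with $\|\nabla H\|^2$, not with $\|\nabla^2H\|^2$, so it gives no leverage on the Hessian term.

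The ``small correction'' you tack on at the end is in fact the heart of the paper's argument, not a perturbation. The paper first writes all the bad terms in the evolution of $\|\nabla H\|^2$ as $C_2(H^2+1)\|\nabla A\|^2$ (equation (4.1)); the point of adding $\Psi=(H^2+4nC_2)(\|A\|^2-\tfrac{1}{n}H^2)$ is that the \emph{reaction} term $-2(\|\nabla A\|^2-\tfrac{1}{n}\|\nabla H\|^2)$ in the evolution of $\|A\|^2-\tfrac{1}{n}H^2$, after Lemma~\ref{lm 2.4}, yields a good $-(\tfrac{H^2}{4n}+C_2)\|\nabla A\|^2$ (equations (4.6)--(4.7)), which exactly cancels the $C_2(H^2+1)\|\nabla A\|^2$. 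This good term does not come from $\Delta(\|A\|^2-\tfrac{1}{n}H^2)$ as you write. The price is a new bad reaction term $4\|A\|^2(\|A\|^2-\tfrac{1}{n}H^2)(H^2+2nC_2)$, and \emph{this} is what $-\beta\|A\|^4$ (not $-\beta H^4$) is used for: its evolution gives $-4\beta\|A\|^6$, and Theorem~\ref{thm3.1} makes $\|A\|^2-\tfrac{1}{n}H^2\le C_1(H^2+1)^{1-\sigma}e^{-\delta_1 t}$ so that the bad term is $o(\|A\|^6)$ and absorbed. The resulting test function is $\Phi=e^{\delta_1 t/2}(\|\nabla H\|^2+C_3\Psi)-\beta\|A\|^4$, for which one obtains $\partial_t\Phi\le\Delta\Phi+C_5e^{-\delta_1 t/2}$ and hence a uniform bound by the maximum principle. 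So your endgame gestures at the right object, but the logical order is inverted: $\Psi$ is what kills the gradient terms, and $\beta\|A\|^4$ is what pays for $\Psi$ via the pinching theorem.
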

\begin{proof}
By Lemma \ref{lm 2.3},
\begin{align}
\frac{\partial}{\partial t}\|\nabla H\|^2=&
-(\frac{\partial}{\partial t}g_{ij})\nabla_iH\nabla_jH+2\nabla_iH\nabla_i(\frac{\partial H}{\partial t})\nonumber\\
=&(2Hh_{ij}+2\bar{R}_{ij}-\frac{\bar{r}}{n+1}g_{ij})\nabla_iH\nabla_jH+
2\nabla_iH\nabla_i(\Delta H+\|A\|^2H+u)\nonumber\\
\leq& \Delta\|\nabla H\|^2-2\|\nabla^2H\|^2+2\|A\|^2\|\nabla H\|^2+2H\nabla_iH\nabla_i\|A\|^2+2h_{ij}h_{jp}\nabla_iH\nabla_pH\nonumber\\
&-2\bar{R}_{ijpj}\nabla_iH\nabla_pH+C\|\nabla A\|^2+2\|\nabla H\|\|\nabla u\|\nonumber\\
\leq&\Delta\|\nabla H\|^2+C_2(H^2+1)\|\nabla A\|^2+C_2e^{-\lambda t}\tag{4.1}\label{4.1}
\end{align}
and
\begin{align*}
&\frac{\partial}{\partial t}(H^2(\|A\|^2-\frac{H^2}{n}))\\
=&H^2\frac{\partial}{\partial t}(\|A\|^2-\frac{H^2}{n})
+(\frac{\partial }{\partial t}H^2)(\|A\|^2-\frac{1}{n}H^2)\\
=&H^2\{\Delta(\|A\|^2-\frac{1}{n}H^2)-2(\|\nabla A\|^2-\frac{\|\nabla H\|^2}{n})+2\|A\|^2(\|A\|^2-\frac{H^2}{n})
+(v-\frac{2}{n}uH)\}\\
&+(\Delta H^2-2\|\nabla H\|^2+2\|A\|^2H^2+2uH)(\|A\|^2-\frac{H^2}{n})\\
=&\Delta(H^2(\|A\|^2-\frac{H^2}{n}))
-4H\nabla_iH\nabla_i(\|A\|^2-\frac{H^2}{n})-2H^2(\|\nabla A\|^2-\frac{\|\nabla H\|^2}{n})\\
&-2(\|A\|^2-\frac{H^2}{n})\|\nabla H\|^2
+4\|A\|^2H^2(\|A\|^2-\frac{H^2}{n})
+H^2(v-\frac{2}{n}uH)+2uH(\|A\|^2-\frac{H^2}{n})\tag{4.2}\label{4.2}
\end{align*}
By applying Theorem 3.1 we can give an  estimate of the second term of (\ref{4.2}) ,
\begin{align*}
\|4H\nabla_iH\nabla_i(\|A\|^2-\frac{H^2}{n})\|=&\|8H\nabla_iHh_{kl}^0\nabla_ih_{kl}^0\|\\
\leq&8\|H\|\|\nabla A\|\|\nabla H\|\|h_{kl}^0\|\\
\leq&8n\|H\|\|\nabla A\|^2C_1(H^2+1)^{\frac{1-\sigma}{2}}\\
\leq&\frac{1}{4n}H^2\|\nabla A\|^2+C_2\|\nabla A\|^2\tag{4.3}\label{4.3}
\end{align*}
By Lemma \ref{lm 2.4}, we can choose $\eta>0$ such that
\begin{equation}\tag{4.4}\label{4.4}
\|\nabla A\|^2-\frac{1}{n}\|\nabla H\|^2\geq\frac{1}{4n}\|\nabla A\|^2-C_2e^{-\lambda t}
\end{equation}
Combining (\ref{3.2}) and (\ref{3.3}) gives
\begin{equation}\tag{4.5}\label{4.5}
H^2(v-\frac{2}{n}uH)+2uH(\|A\|^2-\frac{H^2}{n})\leq C_2(\|A\|^4+1)e^{-\lambda t}
\end{equation}
Substituting (\ref{4.3}), (\ref{4.4}) and (\ref{4.5}) into (\ref{4.2}) yields
\begin{align}
\frac{\partial}{\partial t}(H^2(\|A\|^2-\frac{H^2}{n}))\leq&
\Delta(H^2(\|A\|^2-\frac{H^2}{n}))-\frac{1}{4n}H^2\|\nabla A\|^2+4\|A\|^2H^2(\|A\|^2-\frac{H^2}{n})\nonumber\\
&+C_2\|\nabla A\|^2+C_2(\|A\|^4+1)e^{-\lambda t}\tag{4.6}\label{4.6}
\end{align}
Similarly, we can get the following estimate
\begin{align}
&\frac{\partial}{\partial t}(\|A\|^2-\frac{H^2}{n})\nonumber\\
=&\Delta (\|A\|^2-\frac{H^2}{n})
-2(\|\nabla A\|^2-\frac{\|\nabla H\|^2}{n})+2\|A\|^2(\|A\|^2-\frac{H^2}{n})+v-\frac{2}{n}uH\nonumber\\
\leq&\Delta (\|A\|^2-\frac{H^2}{n})-\frac{1}{2n}\|\nabla A\|^2+2\|A\|^2(\|A\|^2-\frac{H^2}{n})
+C_2(\|A\|^2+1)e^{-\lambda t}\tag{4.7}\label{4.7}
\end{align}
Let $\Psi=H^2(\|A\|^2-\frac{H^2}{n})+4nC_2(\|A\|^2-\frac{H^2}{n})$. Then it follows from (\ref{4.6}) and (\ref{4.7})
\begin{align*}
\frac{\partial}{\partial t}\Psi\leq&
\Delta \Psi-(\frac{H^2}{4n}+C_2)\|\nabla A\|^2+4\|A\|^2(\|A\|^2-\frac{H^2}{n})(H^2+2nC_2)\\
&
+5nC_2^2(\|A\|^4+1)e^{-\lambda t}
\end{align*}
A direct computation shows
\begin{align}
\frac{\partial}{\partial t}\|A\|^4=&2\|A\|^2(\Delta \|A\|^2-2\|\nabla A\|^2+2\|A\|^4+v)\nonumber\\
\geq&\Delta \|A\|^4-12\|A\|^2\|\nabla A\|^2+4\|A\|^6-4n\|A\|^2(\|A\|^2-\frac{1}{n}H^2)\nonumber\\
&-
C_2(\|A\|^4+1)e^{-\lambda t}\tag{4.8}\label{4.8}
\end{align}
Now consider the function
\[\Phi=e^{\frac{\delta_1 t}{2}}(\|\nabla H\|^2+C_3\Psi)-\beta\|A\|^4\]
Choose $C_3\geq12nC_2$, then  there exists a constant $C_4$  such that
\[\frac{\partial}{\partial t}\Phi\leq\Delta \Phi+C_4(\|A\|^4+1)(\|A\|^2-\frac{H^2}{n})e^{\frac{\delta_1 t}{2}}
+C_4(\|A\|^4+1)e^{-\lambda t}-4\beta\|A\|^6\]
Applying Theorem \ref{thm3.1} and  Young's inequality, we obtain
\[\frac{\partial}{\partial t}\Phi\leq\Delta\Phi+C_5e^{-\frac{\delta_1 t}{2}}\]
Therefore $\Phi$ is bounded by a constant $C_6$.  Hence
\[\|\nabla H\|^2\leq(\beta\|A\|^4+C_6)e^{-\frac{\delta_1 t}{2}}.\]
We complete the proof by Lemma 2.6.
\end{proof}

\section{Convergence of the hypersurface}
\quad In this section, we use Theorem 3.1 and Theorem 4.1 to finish the proof of Theorem 1.1.

\begin{proof}[Proof of Theorem 1.1]
We consider the following two cases.

\textbf{Case 1}:
 $\max_{M_t}\|H\|\rightarrow \infty$ as $t\rightarrow T$.  By Theorem \ref{thm 4.1},
we always have
\[\|\nabla H\|\leq\beta^2 H^2+C_{\beta}\ \ \ \text{on}\ \  t\in[0,T).\]
Let
\[H_{\max}(t)=\max_{M_t}H, \ \ H_{\min}(t)=\min_{M_t}H
\]
 Suppose
$\max_{M_t}\|H\|=H_{\max}(t)>0$.
For any $\beta>0$, there exists some $\theta$ depending on $\beta$ with
$C_{\beta}\leq\beta^2 H_{\max}^2$ at $t=\theta$, so
$\|\nabla H\|\leq2\beta^2 H_{\max}^2$.
Let $x_0$ be the point where $H$ attains its maximum.
Then for any point $x$ with $d(x,x_0)\leq\frac{1}{\beta H_{\max}}$, we have
\[H(x)\geq H_{\max}-2d(x,x_0)\beta^2 H_{\max}^2\geq(1-2\beta) H_{\max}\]
and the sectional curvature $K_{M_t}(x)$ of $M_t$ satisfies
\[K_{M_t}(x)\geq\frac{ H^2}{8n^2}\geq\frac{(1-2\beta)^2{H_{\max}^2}}{{8n^2}},
\]
By Myers' theorem, any geodesic starting from $x_0$ with length larger than
$\frac{2\sqrt{2}n\pi}{1-2\beta}H_{\max}^{-1}$ must have conjugate points.
By choosing $\beta$ small, we can get
\[H_{\min}\geq(1-2\beta)H_{\max}\ \ \ \text{on}\ \ \  M_{\theta}\]
Thus by a suitable choice of $\theta$ we know the mean curvature of the hypersurface is positive and can be
arbitrarily large. Moreover, at some $t=\theta$ the inequality  below holds everywhere on $M_{\theta}$
\[\|A\|^2\leq\alpha_n H^2+1<\frac{1}{n-1}H^2\]
Hence $M_{\theta}$ is strictly  convex.  By the maximum principle,  the maximal existence time of the
equation (\ref{1.2}) must be finite.  By a similar argument as Huisken in \cite{Hu3},
 we know $M_t$ converge  to a round point.

\textbf{Case 2}:
$\|H\|$ is  uniformly bounded and $T=\infty$. Now
\[\|A\|^2-\frac{H^2}{n}\leq Ce^{-\delta_1 t},\ \ \ \|\nabla H\|^2\leq Ce^{-\frac{\delta_1 t}{2}}\]
Furthermore, we can get

\textbf{Claim :} $H_{\max}(t)>- \tilde{C}e^{ -\frac{\delta_1 t}{4}}$ and
$H_{\min}(t)< \tilde{C}e^{ -\frac{\delta_1 t}{4}}$ for some large constant $\tilde{C}>C$.

Suppose there exists a moment $t_0$ such that $H_{\max}(t)\leq-\tilde{C}e^{ -\frac{\delta_1 t}{4}}$ at $t=t_0$.
Note $\delta_1\leq \frac{\lambda}{2} $. From (3) in Lemma \ref{lm 2.5}, at $t=t_0$, we have
\begin{align*}
\frac{\partial H_{\max}}{\partial t}&\leq\|A\|^2H_{\max}+H_{\max}+Ce^{-\lambda t}\\
&\leq-\tilde{C}e^{-\delta_1 t}+Ce^{-\lambda t}\\
&<0
\end{align*}
It follows
\[H_{\max}(t)\leq -\tilde{C}e^{ -\frac{\delta_1 t_0}{4}}, \ \forall \  t \in[t_0,\infty)
\]
which contradicts with the fact that $H(\cdot,t)\rightarrow 0$ as $t\rightarrow\infty$.
The other inequality $H_{\min}(t)< \tilde{C}e^{ -\frac{\delta_1 t}{4}}$ can be derived by the same way.

On the other hand, by Lemma \ref{lm 3.2}, the Ricci curvature of $M_t$ is no less than $\frac{1}{8n}$,
thus the diameter of $M_t$ is smaller than $2\sqrt{2}n\pi$. Since
$\|\nabla H\|^2\leq Ce^{-\frac{\delta_1 t}{2}}$, we have
\[\|H_{\max}(t)-H_{\min}(t)\|\leq Ce^{-\frac{\delta_1 t}{4}} \]
Then it follows
$\|H\|^2\leq 4\tilde{C}^{2}e^{-\frac{\delta_1}{2}t}$
and $\|A\|\leq 5\tilde{C}^2e^{-\frac{\delta_1 t}{4}}$.
One can show the exponentially decreasing for $\|\nabla^mA\|$ by the similar argument as \cite{Hu3}.
Since $(N^{n+1},\bar{g}(t))$  converge to  $S^{n+1}$ in $C^{\infty}$-topology,
so we get the $C^{\infty}$-convergence to the totally geodesic sphere for $M_t$.
\end{proof}

\textbf{Addresses:}

\vspace{2mm} Weimin Sheng: Department of Mathematics, Zhejiang University,
Hangzhou 310027, China.

\vspace{2mm} Haobin Yu: Department of mathematics, Zhejiang university,
Hangzhou 310027, China.

\vspace{3mm}

\textbf{Email:} weimins@zju.edu.cn; robin1055@126.com.


\begin{thebibliography}{99}

\bibitem{CE}\textrm{J. Cheeger, D. Ebin , Comparison Theorems in Riemannian Geometry, Amesterdam: North-Holland, 1975.}

\bibitem{CGT}\textrm{J. Cheeger, M. Gromov, and M. Taylor, Finite propagation speed,
kernel estimates for functions of the Laplace operator,
and the geometry of complete Riemannian manifolds, J. Diff Geom. 17(1982),
15-53.}

\bibitem{Co}\textrm{T. Colding and W. Minicozzi, Generic mean curvature flow I;
generic singularities, Ann. of Math. 175 (2012), 755-833.}

\bibitem{Ha}\textrm{R. Hamilton, Three-manifolds with positive Ricci curvatures. J. Diff.
Geom. 17 (1982), 255-306.}

\bibitem{HL}\textrm{X. Han and J. Li, A mean curvature flow along a K\"ahler-Ricci flow, arXiv: 1105.1200v1[math.DG].}

\bibitem {Ho}\textrm{D. Hoffman and J.Spruck, Sobolev and isoperimetric inequalities for
Rimannian submanifolds, Comm. Pure Appl.Math. 27 (1974), 715-727.}

\bibitem {Hu1}\textrm{G. Huisken, Flow by mean curvature of convex surfaces into spheres,
J .Diff.Geom. 20 (1984), 237-266.}

\bibitem{Hu2}\textrm{G. Huisken, Ricci deformation of the metric on a Riemannian manifold,
J. Diff. Geom. 21 (1985), 47-62.}

\bibitem {Hu3}\textrm{G. Huisken, Contracting convex hypersurfaces in Riemannian manifold by
their mean curvature, Invent. Math. 84 (1986), 463-480.}

\bibitem{Hu4}\textrm{G. Huisken, Deforming hypersurfaces of the sphere by their mean curvature,
Math. Z. 195 (1987), 205-219.}

\bibitem {Hu5}\textrm{G. Huisken, Asymptotic behavior for singularities of the mean curvature
flow, J. Diff. Geom. 31 (1990), 285-299.}

\bibitem{Lo}\textrm{J. Lott, Mean curvature flow in a Ricci flow background, Comm. Math. Phys.
313 (2012), 517-533.}

\bibitem{Ma}\textrm{A. Magni, C. Mantegazza and E. Tsatis, Flow by mean curvature inside a moving
ambient space, J. Evol. Equ. 13 (2013), 561-576.}

\bibitem{Pe}\textrm{G. Perelman, The entropy formula for the Ricci flow and its geometric applications,
arXiv:math. DG/0211159.}

\bibitem{SW}\textrm{   W.M. Sheng and X.J. Wang, Singularity profile in the mean curvature flow, Methods
Appl. Anal. 16 (2009) no.2, 139-155.}

\bibitem{Si}\textrm{J. Simons, Minimal varieties in Riemannian manifolds, Ann. of Math. (2) 88
(1968), 62-105.}

\bibitem{Wa}\textrm{X.J. Wang, Convex solutions to the mean curvature flow, Ann. of Math, (2) 173 (2011)
no.3, 1185-1239.}

\bibitem{Ye}\textrm{R.G. Ye, The logarithmic Sobolev inequality along the Ricci flow,
arxiv:0707.2424.}

\end{thebibliography}
\end{document}